\theoremstyle{definition}
\newcommand{\id}{\mbox{d}}
\newcommand{\Ff}{F_a^b}
\newcommand{\Fb}{F_b^a}
\newcommand{\Cf}{C^f}
\newcommand{\Cb}{C^b}
\newtheorem{theorem}{Theorem}
\newtheorem*{acknowledgement}{Acknowledgements}
\newtheorem{alg}[]{Algorithm}
\newtheorem{corollary}[]{Corollary}
\newtheorem{definition}[]{Definition}
\newtheorem{lemma}[]{Lemma}
\providecommand{\theoremname}{Theorem}
\begin{document}

\title{Attracting and repelling Lagrangian coherent structures\\ from a single computation\footnote{\textcolor{red}{Submitted to Chaos/AIP}}}

\author{Mohammad Farazmand$^{1,2}$ and George Haller$^{2}$ \\
 $^{1}$Department of Mathematics\\
 $^{2}$Institute for Mechanical Systems\\
 ETH Zürich, 8092 Zürich, Switzerland  }
 \maketitle
\begin{abstract}
Hyperbolic Lagrangian Coherent Structures (LCSs) are locally most
repelling or most attracting material surfaces in a finite-time dynamical
system. To identify both types of hyperbolic LCSs at the same time
instance, the standard practice has been to compute repelling LCSs
from future data and attracting LCSs from past data. This approach
tacitly assumes that coherent structures in the flow are fundamentally
recurrent, and hence gives inconsistent results for temporally aperiodic
systems. Here we resolve this inconsistency by showing how both repelling
and attracting LCSs are computable at the same time instance from a
single forward or a single backward run. These  LCSs are obtained as 
surfaces normal to the weakest and strongest eigenvectors of the Cauchy-Green strain tensor.
\end{abstract}

\begin{quotation}
\textbf{
Repelling and attracting Lagrangian coherent structures (LCSs) are material
surfaces that govern mixing patterns in complex dynamical systems.
Recent developments made the accurate computation of both types of structures possible,
but not for the same data set: repelling LCSs are invariably obtained from future data, and 
attracting LCSs from past data. For temporally aperiodic flows, this practice locates
repelling and attracting LCSs for two different finite-time dynamical systems. Here we resolve
this inconsistency by showing that both types of LCSs can be computed at the same time instance
from the same data set.
}
\end{quotation}
 
\section{Introduction}

\label{sec:intro} The differential equations governing a number of
physical processes are only known as observational or numerical data
sets. Examples include oceanic and atmospheric particle motion, whose
velocity field is only known at discrete locations, evolving aperiodically
over a finite time-interval of availability. For such temporally aperiodic
data sets, classic dynamical concepts--such as fixed points, periodic
orbits, stable and unstable manifolds or chaotic attractors--are either
undefined or nongeneric.

Instead of relying on classic concepts, one may seek influential surfaces
responsible for the formation of observed trajectory patterns over
a finite time frame of interest. Such a surface is necessarily a material
surface, i.e., a codimension-one set of initial conditions evolving
with the flow. Among material surfaces, an attracting Lagrangian Coherent
Structure (LCS) is defined as a locally most attracting material surface
in the phase space \citep{haller2000,haller11}. Repelling LCSs are defined
as locally most repelling material surfaces, i.e., attracting LCSs
in backward-time. Repelling and attracting LCSs together are
referred to as hyperbolic LCSs. Both heuristic detection methods \citep{peacock10}
and rigorous variational algorithms \citep{haller11,computeVariLCS,geotheory}
are now available for their extraction from flow data.

All available hyperbolic LCS methods fundamentally seek locations
of large particle separation. They will highlight repelling LCS positions
at some initial time $t=a$ from a forward-time analysis of the flow
over a finite time-interval $[a,b].$ Similarly, these methods reveal
attracting LCSs at the final time $t=b$ from a backward-time analysis
of the flow over $[a,b].$ The complete hyperbolic LCS distribution
at a fixed time $t\in[a,b]$ is, therefore, not directly available. 

Two main approaches have been employed to resolve this issue (see figure \ref{fig:approaches} for an illustration):

\begin{enumerate}
\item Approach I: Divide the finite time interval of interest as $[a,b]=[a,t_{0}]\cup[t_{0},b]$.
Compute repelling LCSs from a forward run over $[t_{0},b]$, and attracting
LCSs from the backward run over $[a,t_{0}]$ (see, e.g., \citet{lekien2010,lipinksi10}).
Both repelling and attracting LCSs are then obtained at the same time
slice $t_{0}$. However, they correspond to two different finite-time
dynamical systems: one defined over $[a,t_{0}]$ and the other over
$[t_{0},b]$. This approach works well for a roughly $T$-periodic
system, when $t_{0}-a$ and $b-t_{0}$ are integer multiples of $T$.
In general, however, hyperbolic LCSs computed over $[a,t_{0}]$ and
over $[t_{0},b]$ do not evolve into each other as $t_{0}$ is varied,
and hence the resulting structures are not dynamically consistent.
In addition, one cannot identify attracting LCSs at time $a$ or repelling
LCSs at time $b$ from this approach.
\item Approach II: Extract repelling LCSs at the initial time $a$ from
a forward run over $[a,b]$; extract attracting LCSs at the final
time $b$ from a backward run over $[a,b]$. Obtain repelling
LCSs at any time $t_{0}\in[a,b]$ by advecting repelling LCSs from
$a$ to $t_{0}$ under the flow. Similarly, obtain
attracting LCSs at any time $t_{0}\in[a,b]$ by advecting attracting
LCSs from $b$ to $t_{0}$ under the flow.
This approach identifies LCSs based on the full available data, and
provides dynamically consistent surfaces that evolve into each other
as $t_{0}$ varies \citep{haller11,computeVariLCS}. Since the forward-time
advection of a repelling LCS (as well as the backward-time advection
of an attracting LCS) is numerically unstable (see figure \ref{fig:fwAdv_sM}), this approach requires
extra care to suppress growing instabilities \citep{computeVariLCS}.
Even under well-controlled instabilities, however, a further
issue arises in near-incompressible flows: repelling LCSs shrink
exponentially under forward-advection, and attracting LCSs shrink exponentially
under backward-advection. Therefore, while the LCSs obtained in this
fashion are dynamically consistent, they require substantial numerical
effort to extract and may still reveal little about the dynamics. 
\end{enumerate}

\begin{figure}[t!]
\begin{center}
\subfigure[]{\includegraphics[width=.35\textwidth]{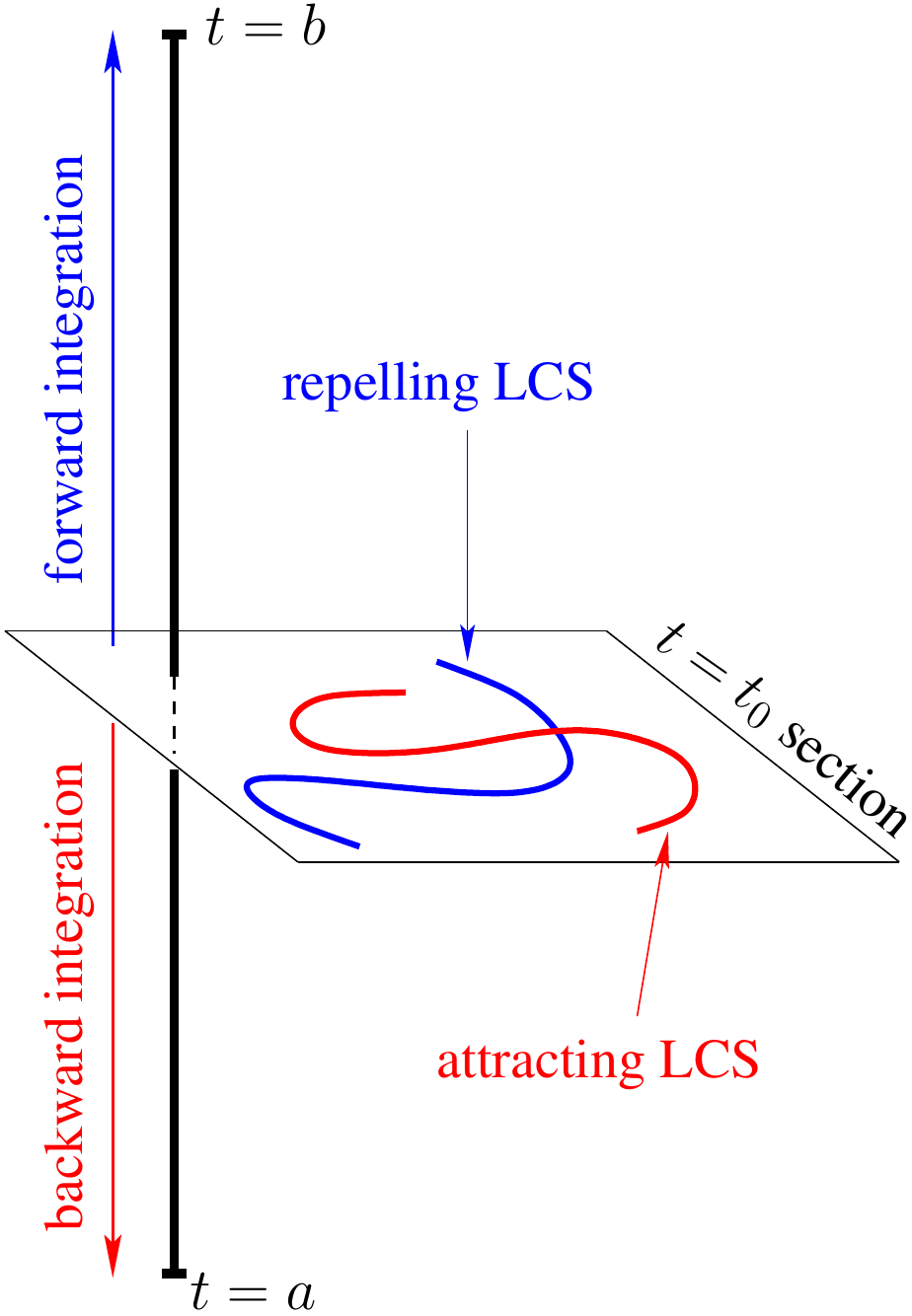}}\hspace{.1\textwidth}
\subfigure[]{\includegraphics[width=.45\textwidth]{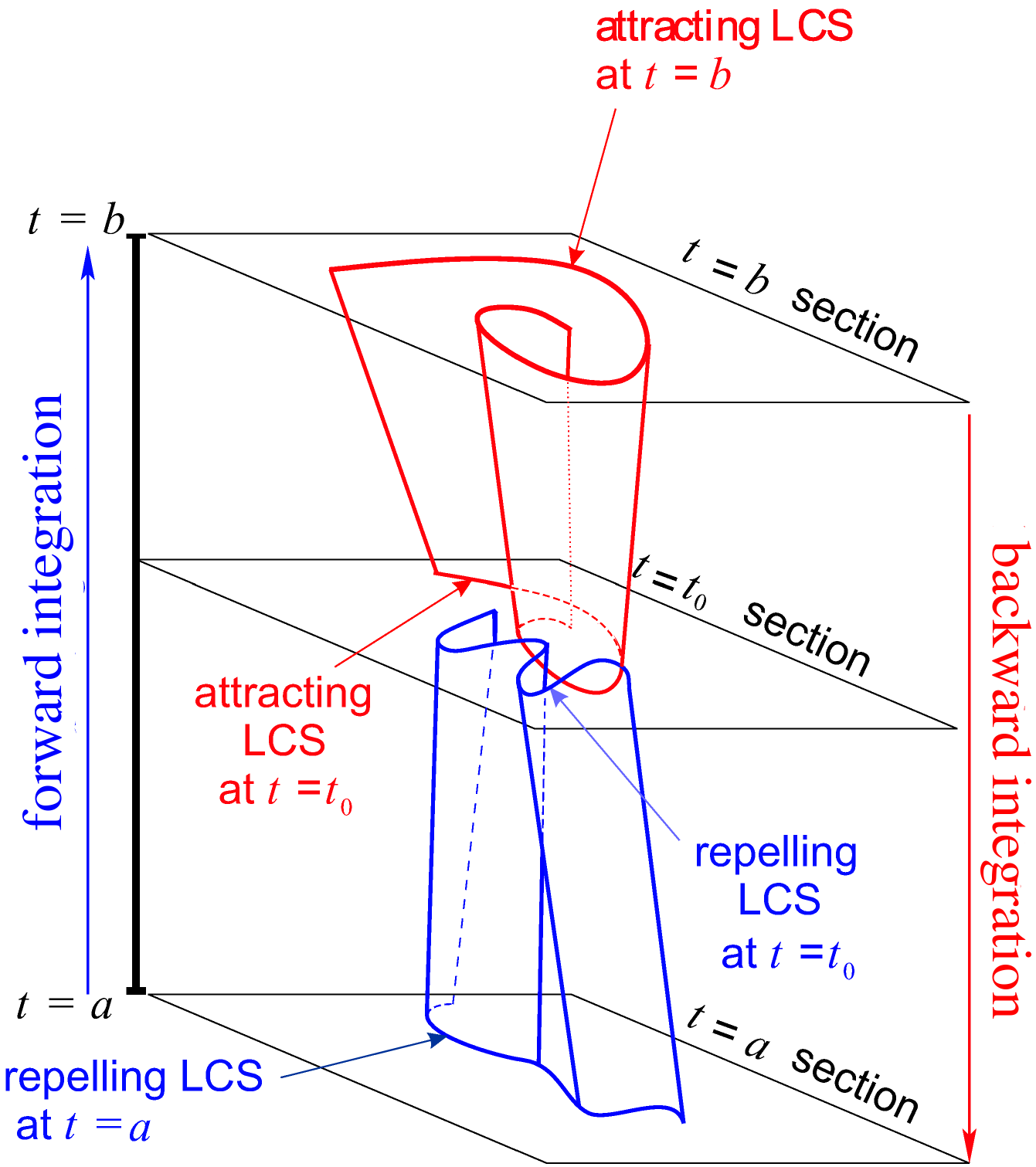}}
\end{center}
\caption{Schematic illustration of Approach I (a) and Approach II (b) in the extended phase space.}
\label{fig:approaches}
\end{figure}

\begin{figure}[t!]
\begin{center}
\includegraphics[width=\textwidth]{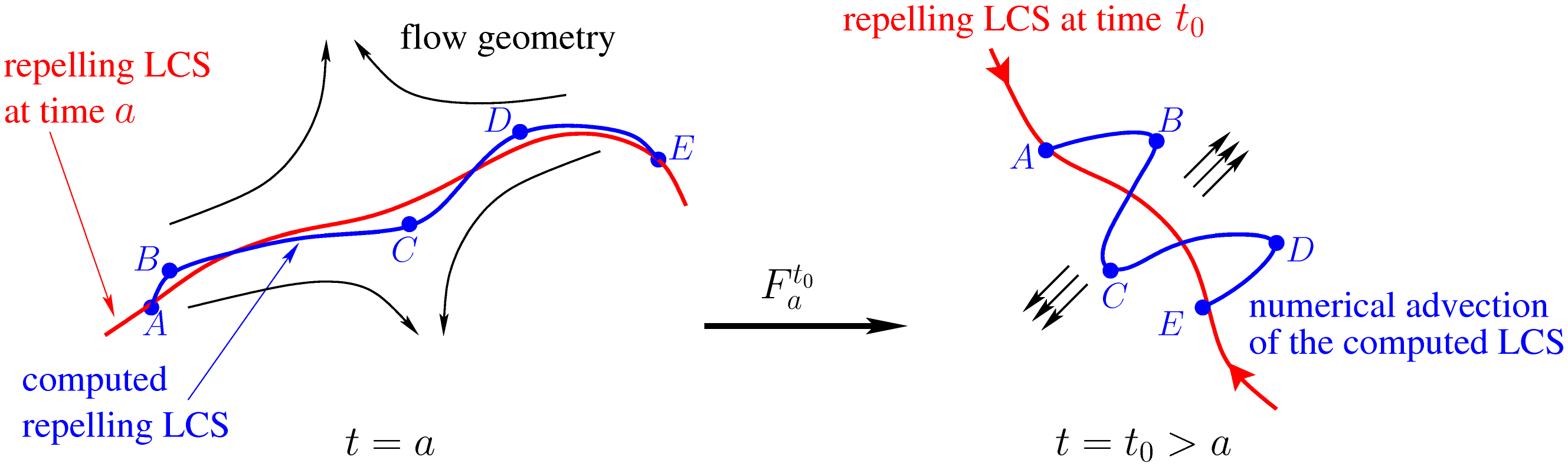}
\caption{The errors in the computation of a repelling LCS grow exponentially
as the LCS is advected forwards in time. The same statement holds for the backward-time 
advection of an attracting LCS.}
\label{fig:fwAdv_sM} 
\end{center}
\end{figure}

Here we develop a new approach that keeps the dynamical consistency
of Approach II but eliminates the instability and shrinkage of advected
LCSs. Our key observation is that attracting LCSs can also be recovered
as codimension-one hypersurfaces normal to the weakest eigenvector
field of the forward Cauchy-Green strain tensor. These \emph{stretch-surfaces} 
are obtained from the same forward-time calculation that
reveals repelling LCSs as \emph{strain-surfaces, }i.e., codimension-one
surfaces normal to the dominant eigenvector of the\emph{ }forward Cauchy-Green
strain tensor \citep{computeVariLCS}. The locally most compressing strain-surfaces
and the locally most expanding stretch-surfaces then reveal repelling and attracting LCSs at the
same initial time $a$ based on a single forward-time calculation
over $[a,b]$. 

We demonstrate the results on three examples: an autonomous Duffing
oscillator (\S\ref{sec:duffing}), a direct numerical simulation
of two-dimensional turbulence (\S\ref{sec:turb}) and the three-dimensional
classic ABC flow (\S\ref{sec:abc}).

\section{Preliminaries and notation}

\label{sec:prelim} Consider the dynamical system 
\begin{eqnarray}
\dot{x}=u(x,t),\ \ \ x\in U\subset\mathbb{R}^{n},\ \ \ t\in I=[a,b],\label{eq:dynsys}
\end{eqnarray}
where $u:U\times I\rightarrow\mathbb{R}^{n}$ is a sufficiently smooth
velocity field. For $t_{0},t\in I$, define the flow map 
\begin{align}
F_{t_{0}}^{t}:\  & U\rightarrow U\nonumber \\
 & x_{0}\mapsto x(t;t_{0},x_{0}),
\end{align}
as the unique one-to-one map that takes the initial condition $x_{0}$
to its time-$t$ position $x(t;t_{0},x_{0})$ under system (\ref{eq:dynsys}).

The \emph{forward} \emph{Cauchy--Green strain tensor} over the time
interval $I$ is defined in terms of the flow gradient $\nabla\Ff$
as 
\begin{equation}
\Cf=\left(\nabla\Ff\right)^{\top}\nabla\Ff.
\end{equation}
At each initial condition $x_{0}\in U$, the tensor $\Cf(x_{0})$
is represented by a symmetric, positive definite, $n\times n$ matrix
with an orthonormal set of eigenvectors $\{\xi_{k}^{f}(x_{0})\}_{1\leq k\leq n}$,
and with a corresponding set of eigenvalues $\{\lambda_{k}^{f}(x_{0})\}_{1\leq k\leq n}$
satisfying \begin{subequations} 
\begin{equation}
\Cf(x_{0})\xi_{k}^{f}(x_{0})=\lambda_{k}^{f}(x_{0})\xi_{k}^{f}(x_{0}),\ \ \ k\in\{1,2,\cdots,n\},
\end{equation}
\begin{equation}
0<\lambda_{1}^{f}(x_{0})\leq\lambda_{2}^{f}(x_{0})\leq\cdots\leq\lambda_{n}^{f}(x_{0}).
\end{equation}
\label{eq:cg_properties} \end{subequations} These invariants of
the Cauchy--Green strain tensor characterize the deformation experienced
by trajectories starting close to $x_{0}$. If a unit sphere is placed
at $x_{0}$, its image under the linearized flow map $\nabla\Ff$
will be an ellipsoid whose principal axes align with the eigenvectors
$\{\xi_{k}^{f}(x_{0})\}_{1\leq k\leq n}$ and have corresponding lengths
$\{\lambda_{k}^{f}(x_{0})\}_{1\leq k\leq n}$.

Similarly, the \emph{backward Cauchy--Green strain tensor }over the
time interval $I$ is defined as 
\begin{equation}
\Cb=\left(\nabla\Fb\right)^{\top}\nabla\Fb.
\end{equation}
Its eigenvalues $\{\lambda_{k}^{b}(x_{0})\}_{1\leq k\leq n}$ and
orthonormal eigenvectors $\{\xi_{k}^{b}(x_{0})\}_{1\leq k\leq n}$
satisfy similar properties as those in equation (\ref{eq:cg_properties}).
Their geometric meaning is similar to that of the invariants of $C^{f}$, but in backward
time.

\section{Repelling and attracting LCS\lowercase{s}}

\label{sec:mathResults} A repelling LCS over the time interval $I$
is a codimension-one material surface that is pointwise more repelling
over $I$ than any nearby material surface. If $\mathcal{R}(t)$ represents
the time-$t$ position of such an LCS, then the initial LCS position
$\mathcal{R}(a)$ must be everywhere orthogonal to the most-stretching
eigenvector $\xi_{n}^{f}$ of the forward Cauchy--Green strain tensor
$\Cf$ \citep{haller11,geotheory}. Specifically, we must have 
\begin{equation}
T_{x_{a}}\mathcal{R}(a)\perp\xi_{n}^{f}(x_{a}),\label{eq:rep}
\end{equation}
for any point $x_{a}\in\mathcal{R}(a)$, where $T_{x_{a}}\mathcal{R}(a)$
denotes the tangent space of $\mathcal{R}(a)$ at point $x_{a}$.

Similarly, an attracting LCS over the time interval $I$ is a codimension-one
material surface that is pointwise more attracting over $I$ than
any nearby material surface. If $\mathcal{A}(t)$ is the time-$t$
position of an \emph{attracting} LCS, its final position $\mathcal{A}(b)$
satisfies 
\begin{equation}
T_{x_{b}}\mathcal{A}(b)\perp\xi_{n}^{b}(x_{b}),\label{eq:att}
\end{equation}
for all points $x_{b}\in\mathcal{A}(b).$ That is, the time-$b$ position
of attracting LCS is everywhere orthogonal to the eigenvector
$\xi_{n}^{b}$ of the backward Cauchy--Green strain tensor $\Cb$.

The relation (\ref{eq:rep}) enables the construction of repelling
LCS candidates at time $t=a$, while (\ref{eq:att}) enables the construction
of attracting LCS candidates at the final time $t=b$ (see, e.g.,
\citet{computeVariLCS,mech1dof}). Since LCSs are constructed as material
surfaces, they move with the flow. Therefore, LCS positions at an intermediate
time $t_{0}\in[a,b]$ are, in principle, uniquely determined by their
end-positions:
\begin{equation}
\mathcal{R}(t_{0})=F_{a}^{t_{0}}(\mathcal{R}(a)),\qquad\mathcal{A}(t_{0})=F_{b}^{t_{0}}(\mathcal{A}(b)).\label{eq:advform}
\end{equation}

As discussed in the introduction, however, using the advection formulae
(\ref{eq:advform}) leads to numerical instabilities. This is because
the material surfaces involved are unstable in the time direction
they are advected in. This instability can only be controlled by employing
a high-end numerical integrator which refines the advected surface
when large stretching develops. Even under high-precision advection, however, 
the end-result is an exponentially shrinking surface which
only captures subsets of the most influential material surfaces.

\section{Main result}

Here we present a direct method to identify both attracting and repelling
LCSs at the same time instance, using the same finite time-interval.
These surfaces, therefore, are based on the assessment of the same
finite-time dynamical system, avoiding the dynamical inconsistency
we reviewed for Approach I in the Introduction.

In particular, we show that the initial position
of an attracting LCS, $\mathcal{A}(a)$, is everywhere orthogonal to the weakest eigenvector
$\xi_{1}^{f}$ of the tensor $\Cf$. This, together with the orthogonality
of the initial repelling LCS position $\mathcal{R}(a)$ to the dominant
eigenvector $\xi_{n}^{f}$ of $\Cf$, allows for the simultaneous
construction of attracting and repelling LCSs at time $t=a$, utilizing
the same time interval $[a,b]$. All this renders the computation
of the backward Cauchy--Green strain tensor $\Cb$ unnecessary.

\begin{definition}[Strain-surface] 
Let $\mathcal{M}(t)$ be an
$(n-1)$-dimensional smooth material surface in $U$, evolving under
the flow map over the time interval $I=[a,b]$ as $\mathcal{M}(t)=F_{a}^{t}(\mathcal{M}(a))$.
Denote the tangent space of $\mathcal{M}$ at a point $x\in\mathcal{M}$
by $T_{x}\mathcal{M}$. 
\begin{enumerate}
\item [\textbf{(i)}] $\mathcal{M}(t)$ is called a \textit{forward strain-surface}
if $\mathcal{M}(a)$ is everywhere normal to the eigenvector field
$\xi_{n}^{f}$, i.e., 
\[
T_{x_{a}}\mathcal{M}(a)\perp\xi_{n}^{f}(x_{a}),\ \ \ \forall x_{a}\in\mathcal{M}(a).
\]

\item [\textbf{(ii)}] $\mathcal{M}(t)$ is called a \textit{backward strain-surface}
if $\mathcal{M}(b)$ is everywhere normal to the eigenvector field
$\xi_{n}^{b}$, i.e.,
\[
T_{x_{b}}\mathcal{M}(b)\perp\xi_{n}^{b}(x_{b}),\ \ \ \forall x_{b}\in\mathcal{M}(b).
\]

\end{enumerate}
Strain-surfaces are generalizations of the strainlines introduced in \citet{computeVariLCS} and \citet{geotheory}
in the theory of hyperbolic LCSs for two-dimensional flows. By contrast,
the stretch-surfaces appearing in the following definition have not
yet been used even in two-dimensional LCS detection.
\label{def:strainsurf} 
\end{definition} 
\begin{definition}[Stretch-surface]
Let $\mathcal{M}(t)$ be an $(n-1)$-dimensional material surface
as in definition \ref{def:strainsurf}. 
\begin{enumerate}
\item [\textbf{(i)}] $\mathcal{M}(t)$ is called a \textit{forward stretch-surface}
if $\mathcal{M}(a)$ is everywhere normal to the eigenvector field
$\xi_{1}^{f}$, i.e.,
\[
T_{x_{a}}\mathcal{M}(a)\perp\xi_{1}^{f}(x_{a}),\ \ \ \forall x_{a}\in\mathcal{M}(a).
\]

\item [\textbf{(ii)}] $\mathcal{M}(t)$ is called a \textit{backward stretch-surface}
if $\mathcal{M}(b)$ is everywhere normal to the eigenvector field
$\xi_{1}^{b}$, i.e.,
\[
T_{x_{b}}\mathcal{M}(b)\perp\xi_{1}^{b}(x_{b}),\ \ \ \forall x_{b}\in\mathcal{M}(b).
\]
 
\end{enumerate}
\label{def:stretchsurf} \end{definition}

By definition, the local orientation of a forward strain-surface is
known at the initial time $t=a$. The following theorem determines
the local orientation of the same strain-surface at the final time
$t=b$, rendering the forward-advection of the surface unnecessary.
The same theorem provides the local orientation of backward strain-surfaces
at the initial time $t=a$ (see figure \ref{fig:fw_strain_surf} for
an illustration).

\begin{theorem}\ \vspace{.02cm}
\begin{enumerate}
\item [\textbf{(i)}] Forward strain-surfaces coincide with backward stretch-surfaces. 
\item [\textbf{(ii)}] Backward strain-surfaces coincide with forward stretch-surfaces
\end{enumerate}
\label{thm:strainsurf_orientation} 
\end{theorem}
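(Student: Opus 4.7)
The plan is to establish an eigenvector correspondence between $C^f(x_a)$ and $C^b(x_b)$ at flow-connected points $x_b=F_a^b(x_a)$, and then verify that the defining orthogonality conditions of the four surface types match up pairwise under that correspondence. Throughout, I will write $A:=\nabla F_a^b(x_a)$, so that $C^f(x_a)=A^\top A$.

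First I would record the chain-rule identity $\nabla F_b^a(x_b)=A^{-1}$, which gives $C^b(x_b)=A^{-\top}A^{-1}=(AA^\top)^{-1}$. Combined with the singular value decomposition $A=U\Sigma V^\top$, this identifies the $\xi_k^f(x_a)$ with the columns of $V$ and the $\xi_k^b(x_b)$ with the columns of $U$, but with reversed ordering: the eigenvalues of $C^b(x_b)$ are $\{1/\sigma_j^2\}$, so when sorted as in (\ref{eq:cg_properties}) we get $\lambda_k^b(x_b)=1/\lambda_{n+1-k}^f(x_a)$ and
\[
A\,\xi_k^f(x_a)=\sqrt{\lambda_k^f(x_a)}\;\xi_{n+1-k}^b(x_b).
\]
Two consequences will drive the proof: $A\,\xi_n^f \parallel \xi_1^b$ and $A\,\xi_1^f \parallel \xi_n^b$.

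Next I would use the fact that the flow map advects tangent spaces linearly, i.e.\ $T_{x_b}\mathcal{M}(b)=A\,T_{x_a}\mathcal{M}(a)$. For part (i), I would show that for any $v\in T_{x_a}\mathcal{M}(a)$,
\[
\langle A v,\,\xi_1^b(x_b)\rangle=\frac{1}{\sqrt{\lambda_n^f(x_a)}}\,\langle A v,\,A\,\xi_n^f(x_a)\rangle=\frac{1}{\sqrt{\lambda_n^f(x_a)}}\,\langle v,\,C^f(x_a)\,\xi_n^f(x_a)\rangle=\sqrt{\lambda_n^f(x_a)}\,\langle v,\xi_n^f(x_a)\rangle.
\]
Hence $T_{x_a}\mathcal{M}(a)\perp\xi_n^f(x_a)$ if and only if $T_{x_b}\mathcal{M}(b)\perp\xi_1^b(x_b)$, which is exactly the statement that $\mathcal{M}$ is a forward strain-surface iff it is a backward stretch-surface. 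Part (ii) is proved by the same calculation with the indices $1$ and $n$ interchanged, using $A\,\xi_1^f=\sqrt{\lambda_1^f}\,\xi_n^b$.

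The main obstacle is purely bookkeeping: one must be careful that the ordering convention (\ref{eq:cg_properties}) forces the index reversal $k\mapsto n+1-k$ between forward and backward eigenvectors (because $C^b(x_b)$ and $(C^f(x_a))^{-1}$ have the same eigenvectors but inverse eigenvalues, after transporting the base point). Once this reversal is in place, the algebra is a one-line inner product manipulation; the substantive content is the SVD-based correspondence between the eigenframes of $C^f(x_a)$ and $C^b(x_b)$.
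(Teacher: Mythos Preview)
Your argument is correct and in some ways cleaner than the paper's. The paper does not invoke the singular value decomposition; instead it proves an algebraic ``self-scaling'' identity (its Lemma~2),
\[
\langle \xi_n^f(x_a),\,\nabla F_b^a(x_b)\,\xi_k^b(x_b)\rangle
= \lambda_n^f(x_a)\,\lambda_k^b(x_b)\,\langle \xi_n^f(x_a),\,\nabla F_b^a(x_b)\,\xi_k^b(x_b)\rangle,
\]
by twice inserting $(\nabla F)^{-\top}(\nabla F)^\top$ and using $\nabla F_b^a=(\nabla F_a^b)^{-1}$. Combined with the eigenvalue reciprocity $\lambda_n^f(x_a)=1/\lambda_1^b(x_b)$ (its Lemma~1), this forces the inner product to vanish for $k\ge 2$ whenever $\lambda_1^b\neq\lambda_k^b$, and the two implications of part~(i) are then argued separately. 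Your SVD route compresses both lemmas into the single relation $A\,\xi_k^f=\sqrt{\lambda_k^f}\,\xi_{n+1-k}^b$, from which the full index-reversing eigenframe correspondence and the ``if and only if'' fall out of one inner-product line; the price is that the identification of the $\xi_k^b$ with columns of $U$ tacitly assumes simple singular values (equivalently, the same non-degeneracy $\lambda_{n-1}^f<\lambda_n^f$ the paper invokes explicitly). You may want to state that caveat once, and note that the parallelisms $A\xi_n^f\parallel\xi_1^b$ and $A\xi_1^f\parallel\xi_n^b$ are only determined up to sign, which is harmless since you only use orthogonality.
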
 
\begin{proof}
See Appendix \ref{app:proof}. 
\end{proof}

\begin{figure}[h!]
\begin{center}
\includegraphics[width=.8\textwidth]{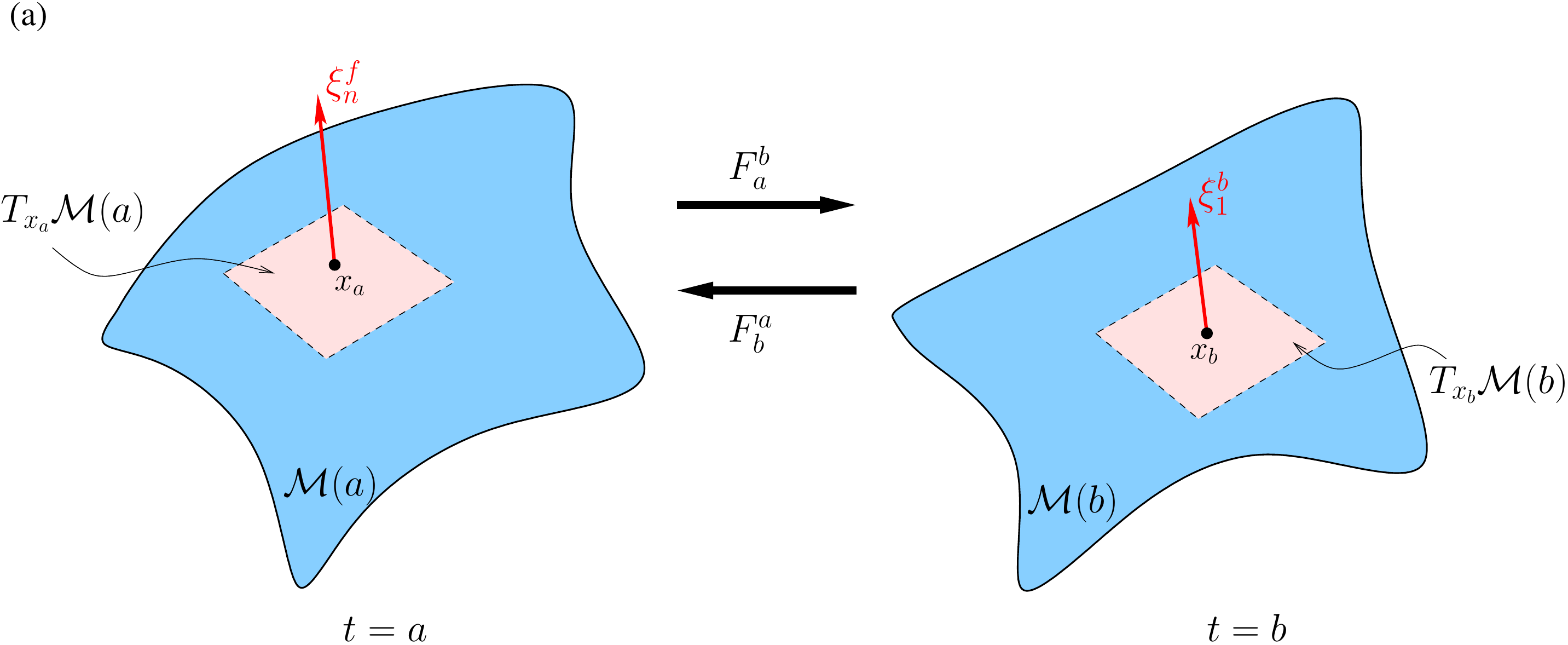}\\
\includegraphics[width=.8\textwidth]{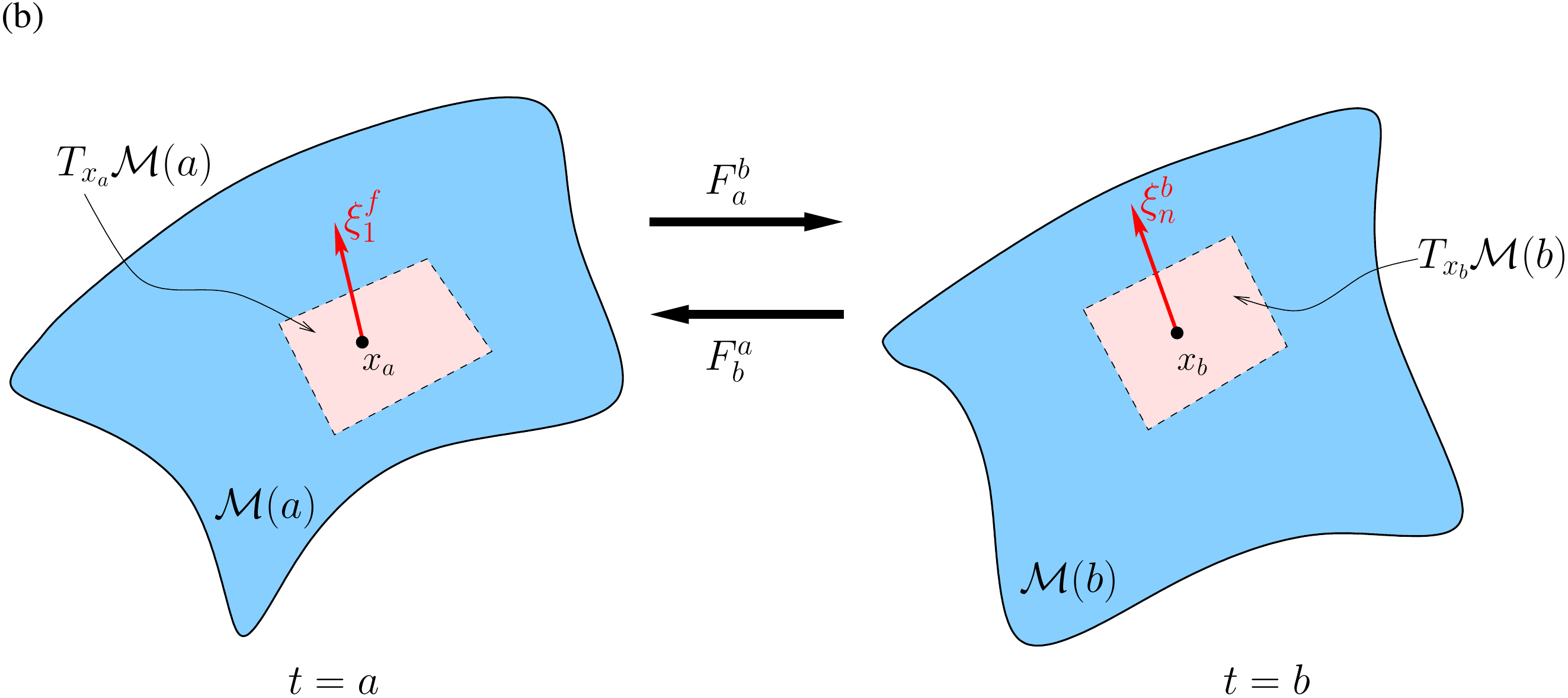} 
\caption{(a) A forward strain-surface evolves into a backward stretch-surface.
(b) A forward stretch-surface evolves into a backward strain-surface.
}
\label{fig:fw_strain_surf}
\end{center}
\end{figure}

The following corollary summarizes the implications of Theorem \ref{thm:strainsurf_orientation},
along with known results from \citet{haller11} and \citet{computeVariLCS}.

\begin{corollary} Let $\mathcal{R}(t)$ and $\mathcal{A}(t)$ be,
respectively, repelling and attracting LCSs of the dynamical system
(\ref{eq:dynsys}). Then the following hold:
\begin{enumerate}
\item[\textbf{(i)}] A repelling LCS, $\mathcal{R}(t)$, is a forward strain-surface,
i.e., $\mathcal{R}(a)$ is everywhere orthogonal to the eigenvector
field $\xi_{n}^{f}$. Furthermore, $\mathcal{R}(t)$ is also a backward
stretch-surface, i.e., $\mathcal{R}(b)$ is everywhere orthogonal
to the eigenvector field $\xi_{1}^{b}$.
\item[\textbf{(ii)}] An attracting LCS, $\mathcal{A}(t)$, is a forward stretch-surface,
i.e., $\mathcal{A}(a)$ is everywhere orthogonal to the eigenvector
field $\xi_{1}^{f}$. Furthermore, $\mathcal{A}(t)$ is also a backward
strain-surface, i.e., $\mathcal{A}(b)$ is everywhere orthogonal to
the eigenvector field $\xi_{n}^{b}$. 
\end{enumerate}
\label{cor:lcs_orientation} 
\end{corollary}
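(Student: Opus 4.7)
The plan is to prove (i) and (ii) in one stroke by pinning down how the linearized flow $D:=\nabla\Ff(x_a)$ relates the eigenstructures of $\Cf(x_a)$ and $\Cb(x_b)$ at corresponding points $x_b=\Ff(x_a)$. The natural tool is the singular value decomposition $D=U\Sigma V^{\top}$ with $\sigma_1\le\cdots\le\sigma_n$: it gives $\Cf(x_a)=V\Sigma^{2}V^{\top}$ directly, and using $\nabla\Fb(x_b)=D^{-1}$ it also gives $\Cb(x_b)=(DD^{\top})^{-1}=U\Sigma^{-2}U^{\top}$. Imposing the increasing-order convention (\ref{eq:cg_properties}) on both spectra then forces the reverse ordering on the backward side, $\lambda_{i}^{b}=\sigma_{n+1-i}^{-2}$, which is the structural feature that drives the theorem.

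The second step is to read off the eigenvector identities this reversal produces. Columns of $V$ are the forward eigenvectors $\xi_i^f$ in their natural order, while the backward eigenvectors $\xi_i^b$ are the columns of $U$ in \emph{reverse} order. The SVD identity $Dv_k=\sigma_k u_k$ then yields
\[
\xi_{1}^{b}(x_{b})=\frac{D\,\xi_{n}^{f}(x_{a})}{\sqrt{\lambda_{n}^{f}(x_{a})}},\qquad
\xi_{n}^{b}(x_{b})=\frac{D\,\xi_{1}^{f}(x_{a})}{\sqrt{\lambda_{1}^{f}(x_{a})}}.
\]
Geometrically, the flow gradient carries the most-stretching forward direction at $x_a$ onto the least-stretching backward direction at $x_b$, and vice versa --- which is already the content of (i) and (ii).

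Both statements now reduce to a one-line orthogonality computation. For (i), take any forward strain-surface $\mathcal{M}$ and any $w\in T_{x_a}\mathcal{M}(a)$, which by definition satisfies $w\perp\xi_n^f(x_a)$. Its image $Dw$ spans $T_{x_b}\mathcal{M}(b)$, and combining the first identity above with $D^{\top}D=\Cf(x_a)$,
\[
\langle Dw,\xi_{1}^{b}(x_{b})\rangle=\frac{1}{\sqrt{\lambda_{n}^{f}}}\,\langle w,\Cf\,\xi_{n}^{f}\rangle=\sqrt{\lambda_{n}^{f}}\,\langle w,\xi_{n}^{f}\rangle=0,
\]
so $\mathcal{M}$ is a backward stretch-surface; the converse is automatic because $D$ is invertible and every step is reversible. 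Part (ii) is the same calculation with the indices $1$ and $n$ swapped.

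The only real obstacle is conceptual rather than technical: one must notice that the common increasing-order convention applied on both sides forces $\xi_{1}^{b}$ to pair with $\xi_{n}^{f}$ rather than with $\xi_{1}^{f}$, which is exactly the mechanism behind the strain/stretch duality. A minor caveat to flag: if $\lambda_{1}^{f}$ or $\lambda_{n}^{f}$ is a repeated eigenvalue at some point, the displayed identities should be read as statements about the corresponding eigenspaces, but since the theorem only demands orthogonality to a single direction, it continues to hold with any representative chosen from the degenerate subspace.
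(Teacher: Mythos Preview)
Your argument is correct and establishes the strain/stretch duality that underlies the corollary, but by a genuinely different route than the paper. The paper does not prove the corollary from scratch; it presents it as the combination of Theorem~\ref{thm:strainsurf_orientation} with the previously known characterizations $T_{x_a}\mathcal{R}(a)\perp\xi_n^f(x_a)$ and $T_{x_b}\mathcal{A}(b)\perp\xi_n^b(x_b)$ (equations (\ref{eq:rep})--(\ref{eq:att})). Its proof of Theorem~\ref{thm:strainsurf_orientation}, in Appendix~\ref{app:proof}, proceeds through two auxiliary lemmas: an eigenvalue reciprocity $\lambda_n^f(x_a)=1/\lambda_1^b(x_b)$, and a self-reproducing inner-product identity $\langle\xi_n^f,\nabla\Fb\,\xi_k^b\rangle=\lambda_n^f\lambda_k^b\langle\xi_n^f,\nabla\Fb\,\xi_k^b\rangle$ obtained by inserting $I=(\nabla F)^{-\top}(\nabla F)^{\top}$ twice and then arguing that the factor $\lambda_n^f\lambda_k^b\neq 1$ forces the inner product to vanish. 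Your SVD approach is more economical: writing $D=U\Sigma V^{\top}$ once delivers both the eigenvalue reversal and the explicit eigenvector transport $\xi_1^b\parallel D\,\xi_n^f$ simultaneously, which is in fact a sharper statement than the paper's orthogonality conclusion and reduces the final step to a single line. The paper's route, in exchange, never names the left singular vectors and works purely with inner products, so it avoids committing to a global sign or ordering convention for $U$.

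One small omission to fix: you never explicitly invoke the step that makes $\mathcal{R}(t)$ a forward strain-surface or $\mathcal{A}(t)$ a backward strain-surface in the first place. What you actually prove is that \emph{any} forward strain-surface is a backward stretch-surface and conversely, i.e.\ Theorem~\ref{thm:strainsurf_orientation}. To close the corollary as stated, open with a sentence citing (\ref{eq:rep}) and (\ref{eq:att}) so that the specific material surfaces $\mathcal{R}$ and $\mathcal{A}$ actually enter the argument; otherwise the symbols $\mathcal{R}$ and $\mathcal{A}$ never appear in your proof.
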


Among other things, the above corollary enables the visualization of
attracting and repelling LCSs simultaneously at the initial time $t=a$
of a finite time-interval $[a,b]$ over which the underlying dynamical
system is known (see section \S\ref{sec:examples} below for examples).
This only requires the computation of the forward-time Cauchy--Green strain tensor
$\Cf$, rendering backward-time computations unnecessary.

\section{Examples}\label{sec:examples} 
Here we demonstrate the application of corollary
\ref{cor:lcs_orientation} on three examples: the classic Duffing
oscillator, a two-dimensional turbulence simulation, and the classic
ABC flow. In the two-dimensional case (i.e., $n=2$), we refer to
strain- and stretch-surfaces as \emph{strainlines} and \emph{stretchlines},
respectively.

\subsection{Duffing oscillator}\label{sec:duffing} 
Here we show that even for a two-dimensional
autonomous system, stretchlines and strainlines act as \emph{de facto}
stable and unstable manifolds over finite time intervals. Indeed,
over such intervals, sets of initial conditions will be seen to follow
stretchlines in forward time. Only asymptotically do these initial
conditions align with the well-known classic unstable manifolds. 

\begin{figure}[h!]
\centering
\includegraphics[width=0.3\textwidth]{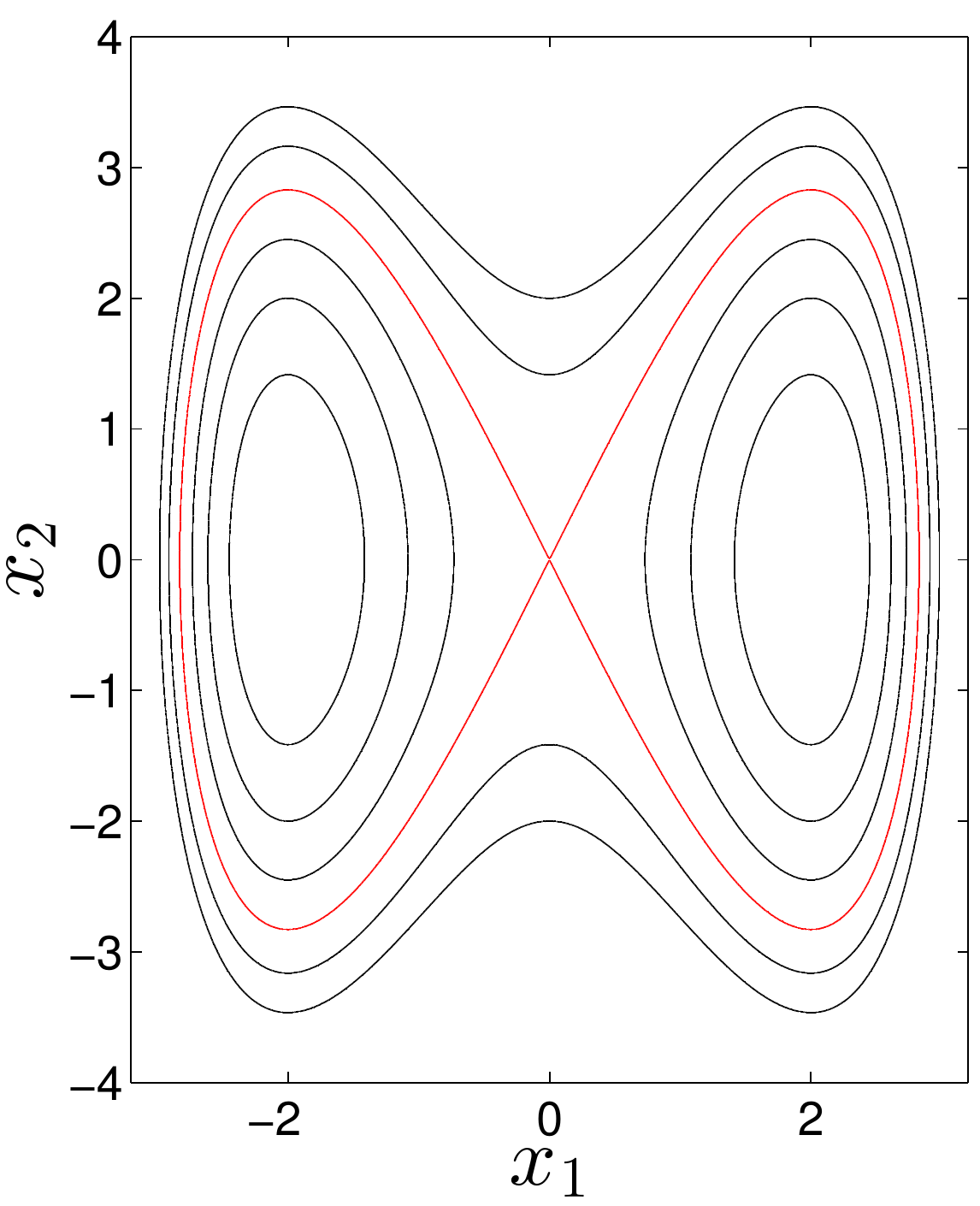}
\caption{Trajectories of system (\ref{eq:duffing}). The homoclinic orbits
are shown in red.}
\label{fig:ham_duffing}
\end{figure}

Consider the unforced and undamped Duffing oscillator 
\begin{align}
\dot{x}_{1} & =x_{2},\nonumber \\
\dot{x}_{2} & =4x_{1}-x_{1}^{3},\label{eq:duffing}
\end{align}
whose Hamiltonian $H(x_{1},x_{2})=\frac{1}{2}x_{1}^{4}-4x_{1}^{2}+x_{2}^{2}$
is conserved along the trajectories (see figure \ref{fig:ham_duffing}).
The hyperbolic fixed point $(0,0)$ of the system admits two homoclinic
orbits (shown in red), which coincide with the stable and unstable
manifolds of the fixed point.

\begin{figure}[t!]
\centering \subfigure[]{\includegraphics[width=0.7\textwidth]{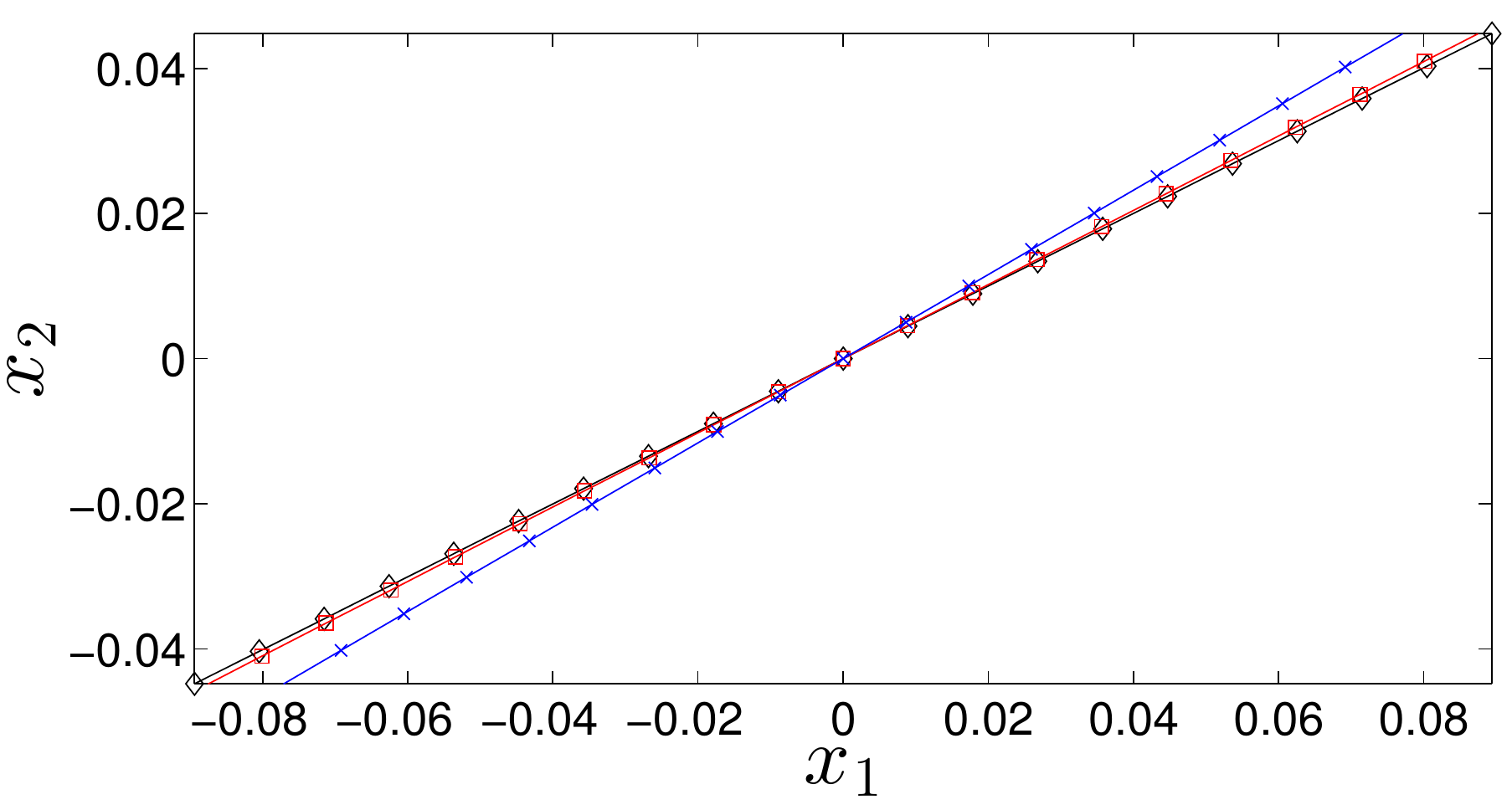}}\\
 \subfigure[]{\includegraphics[width=0.35\textwidth]{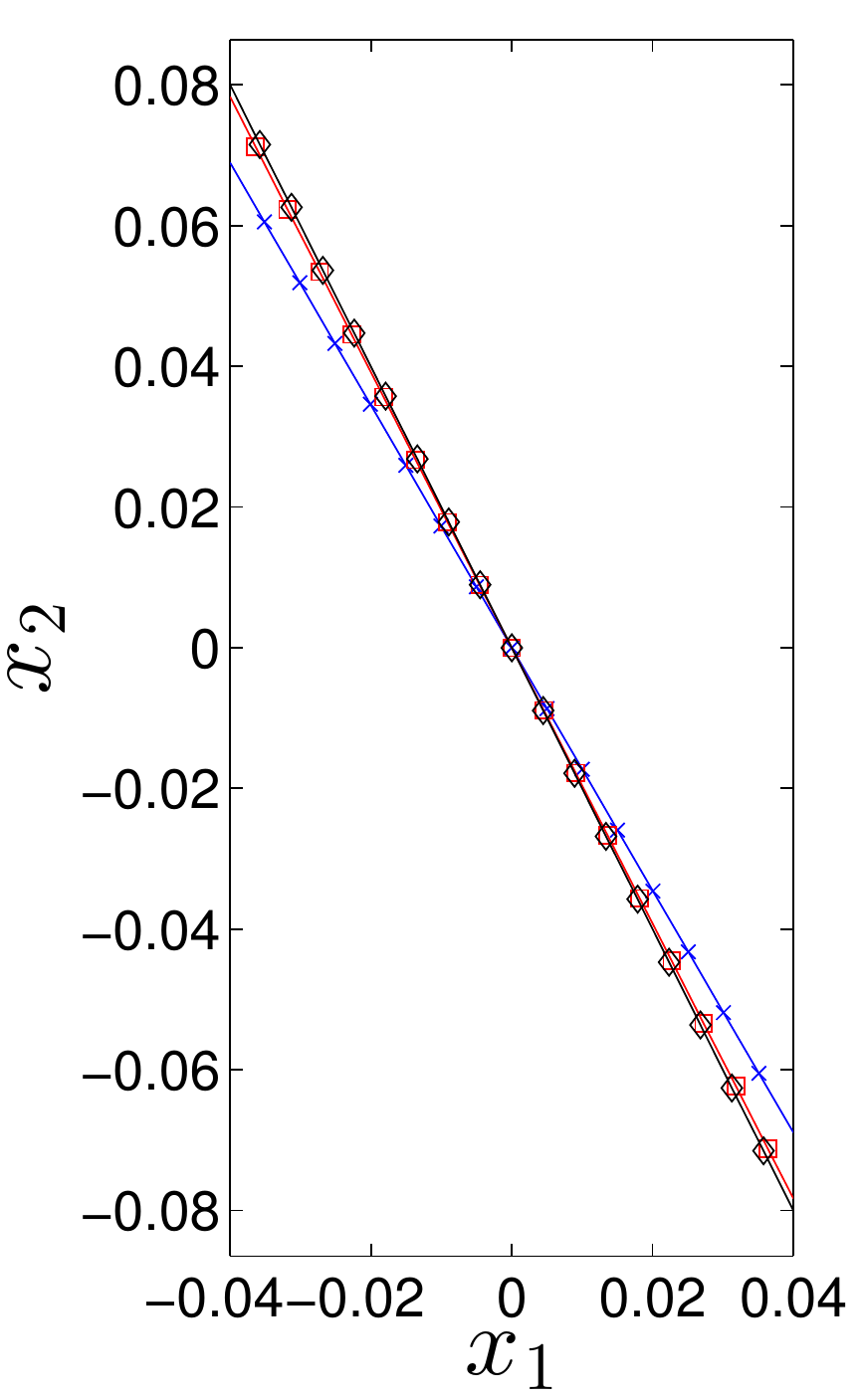}}
\subfigure[]{\includegraphics[width=0.36\textwidth]{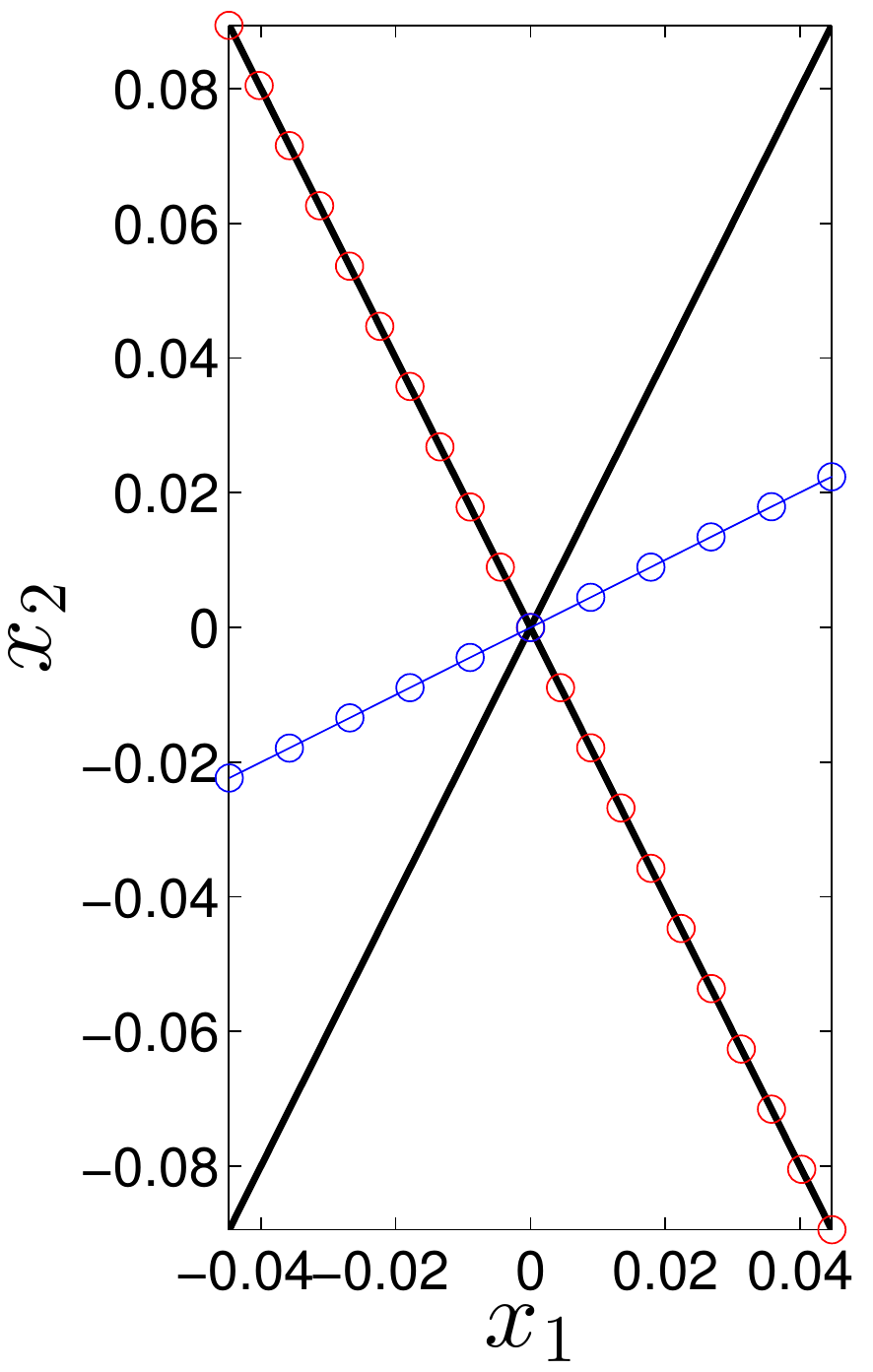}}
\caption{(a) Forward stretchline through the origin for three integration times $T=0.5$
(\textcolor{blue}{$-\times-$}), $T=1$ (\textcolor{red}{$-\square-$})
and $T=2$ ($-\diamond-$). (b) Forward strainline for the same integration
times, as in panel (a). (c) The asymptotic position of the strainline
(\textcolor{red}{$-\circ-$}) and the stretchline (\textcolor{blue}{$-\circ-$})
compared to the classic stable and unstable manifolds (black).}
\label{fig:sMuM_convergence} 
\end{figure}

\begin{figure}[t!]
\centering \subfigure[]{\includegraphics[width=0.4\textwidth]{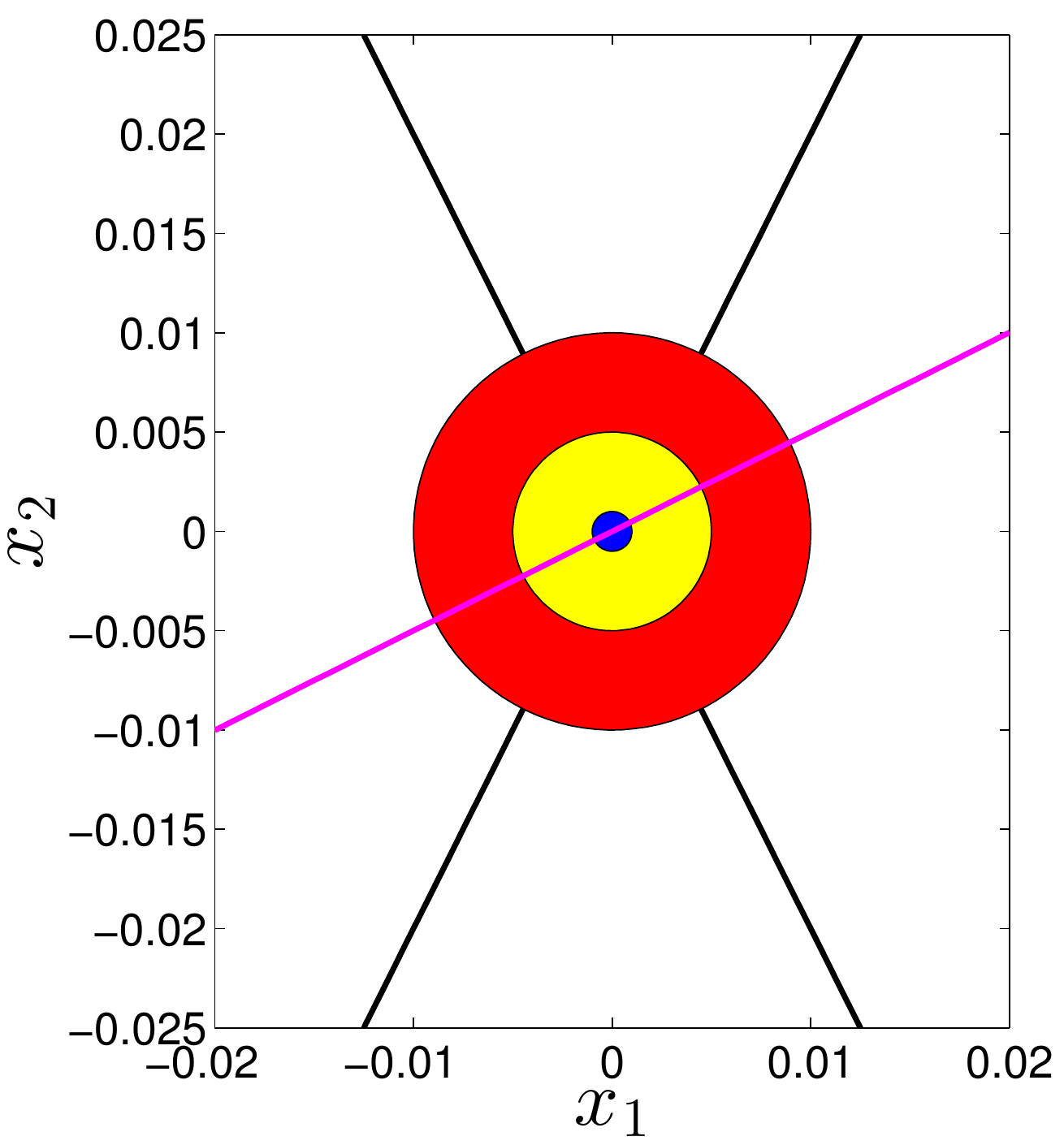}}
\subfigure[]{\includegraphics[width=0.4\textwidth]{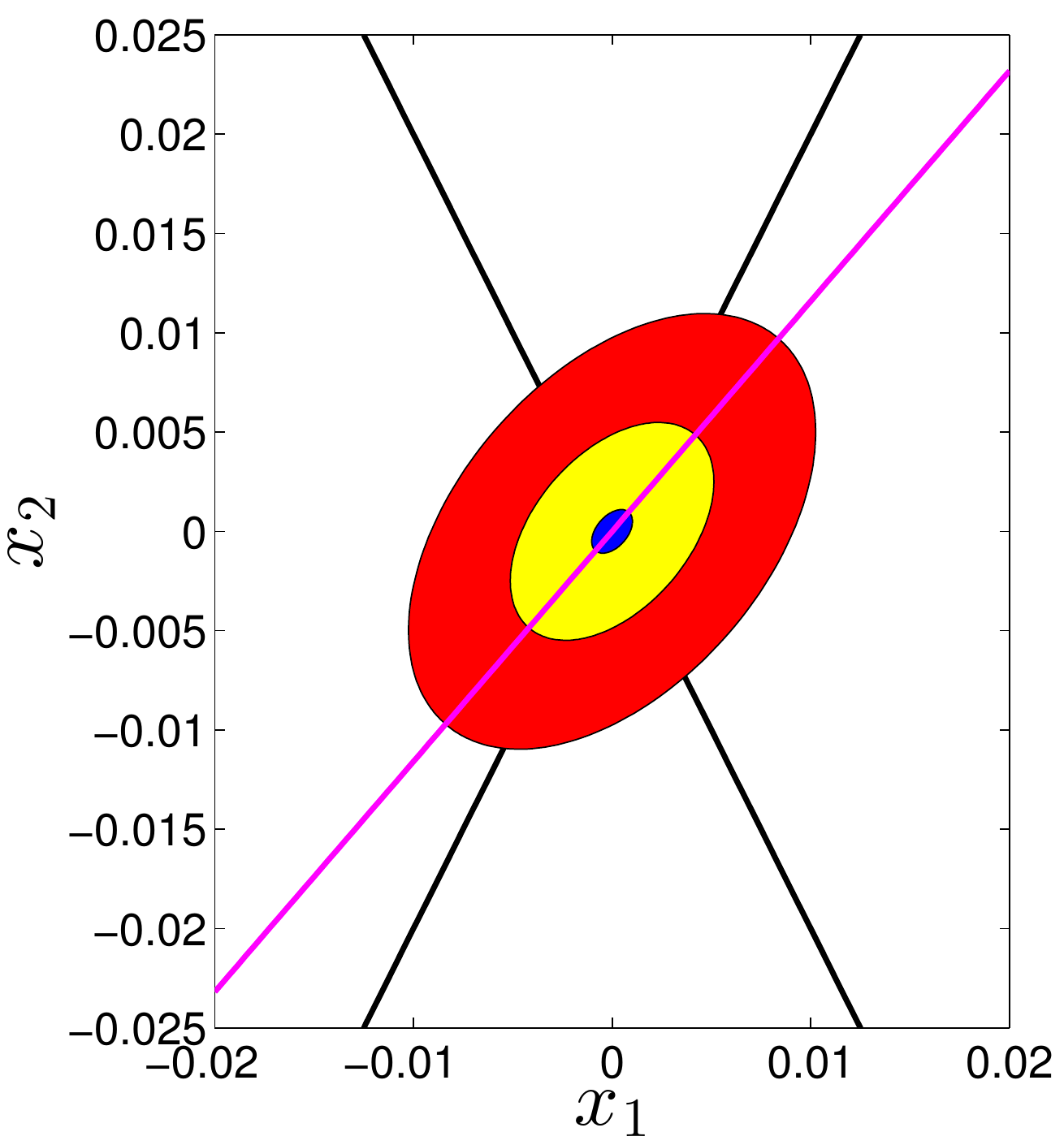}}\\
 \subfigure[]{\includegraphics[width=0.4\textwidth]{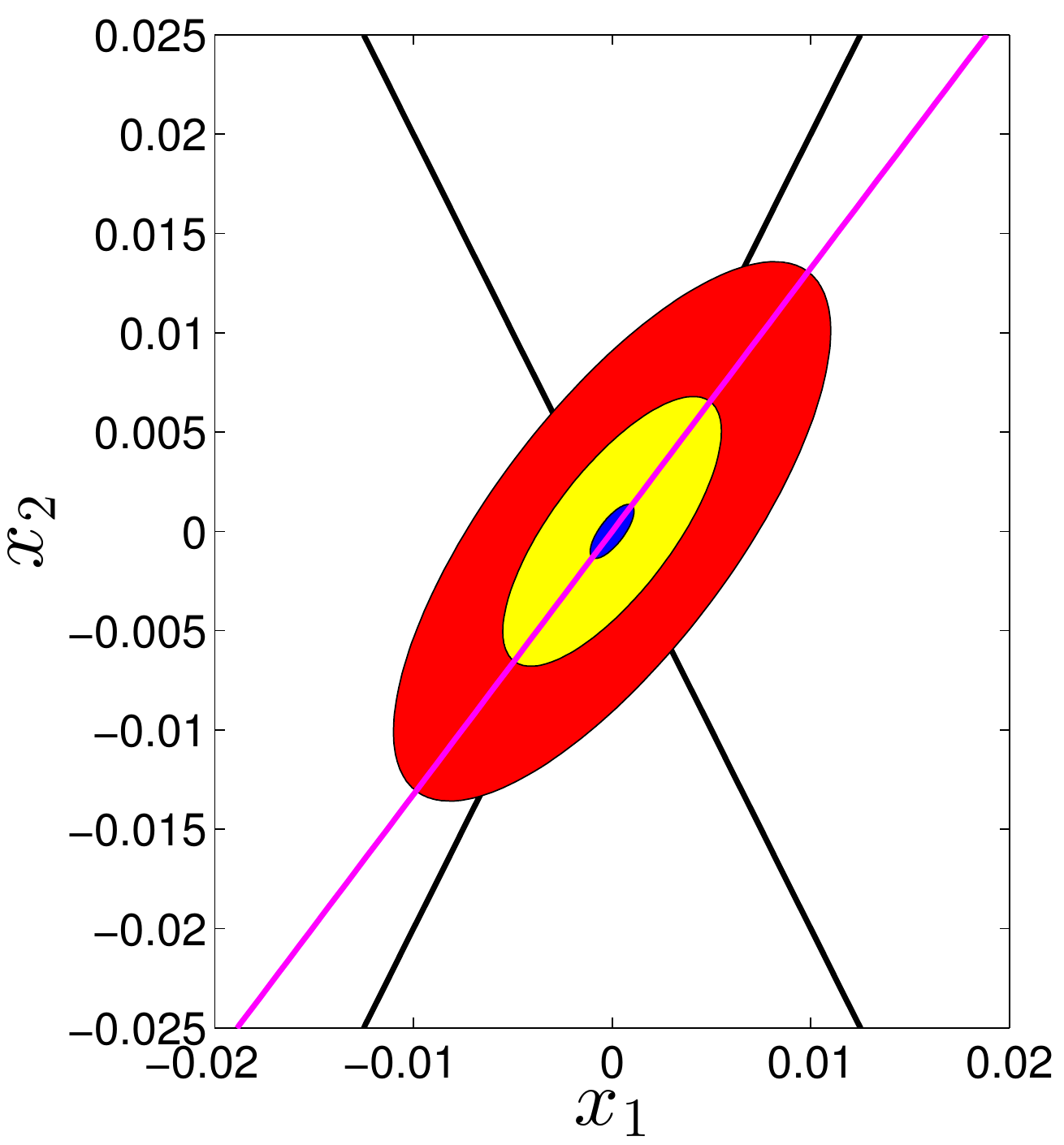}}
\subfigure[]{\includegraphics[width=0.4\textwidth]{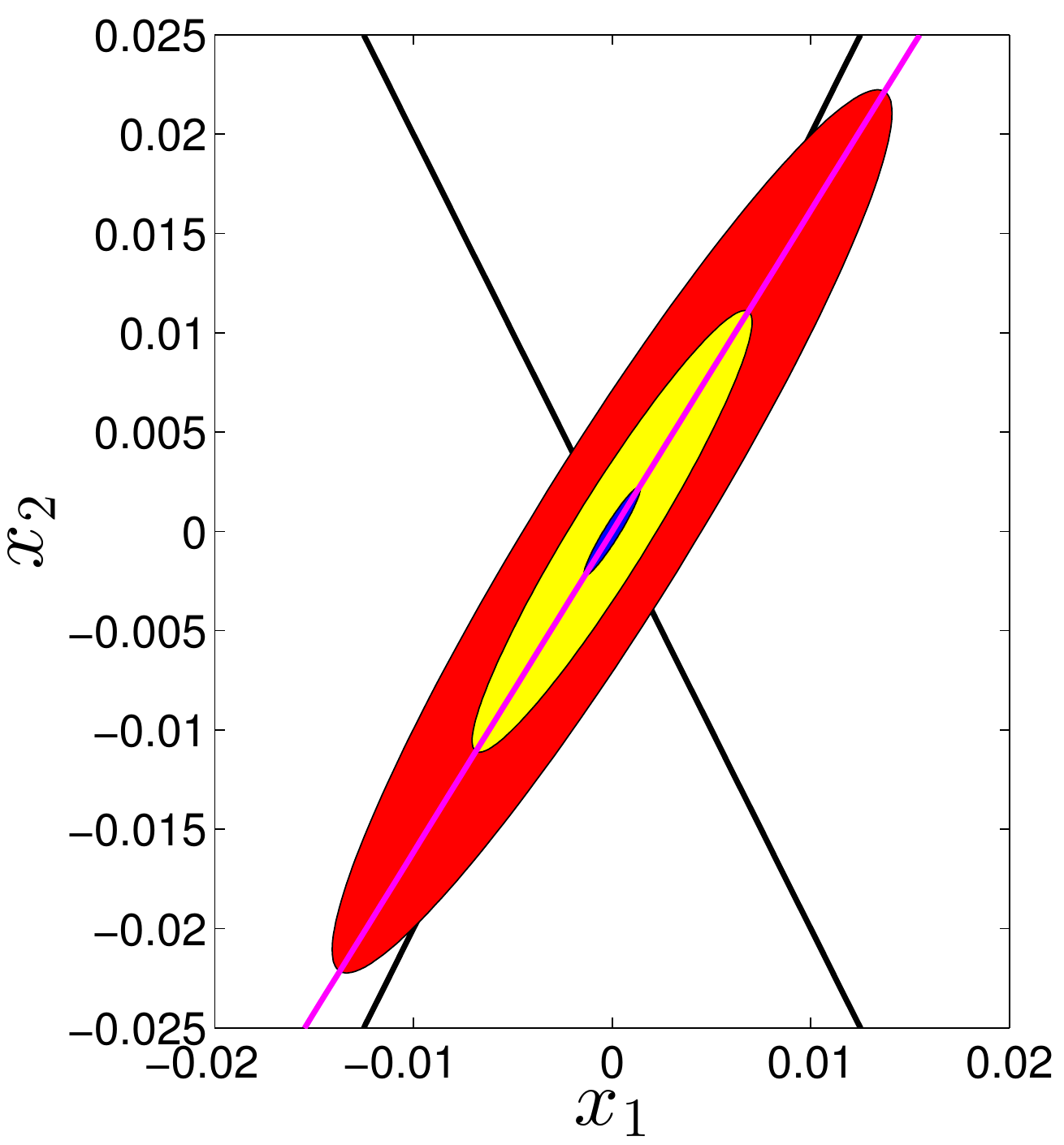}}
\caption{(a) Classical stable and unstable manifolds (black) are shown together
with the stretchline through the origin (magenta). Three blobs of
tracers with radii $10^{-3}$ (blue), $5\times10^{-3}$ (yellow) and
$10^{-2}$ (red) are centered at the origin. The tracers and the manifolds
are then advected to time $t=0.1$ (b) $t=0.2$ (c) and $t=0.4$ (d).
Over the time interval $[0,2]$, the stretchline is the de facto unstable
manifold for spreading tracers. For larger advection times, this de
facto unstable manifold practically coincides the classic unstable
manifold of the origin}
\label{fig:duffing_tracer} 
\end{figure}

By Definition \ref{def:strainsurf}, forward strainlines over a finite
time interval are everywhere orthogonal to the eigenvector field $\xi_{2}^{f}$
of the forward strain tensor $\Cf$. As a result, strainlines are
trajectories of the autonomous ordinary differential equation (ODE) 
\begin{equation}
r'(s)=\xi_{1}^{f}(r(s)),\ \ \ r(0)=r_{0},\label{eq:xi1lin_ode}
\end{equation}
where $r:s\mapsto r(s)$ denotes parametrization by arc-length. Similarly,
forward stretchlines are trajectories of the ODE 
\begin{equation}
p'(s)=\xi_{2}^{f}(p(s)),\ \ \ p(0)=p_{0},\label{eq:xi2lin_ode}
\end{equation}
with $p:s\mapsto p(s)$ denoting an arclength-parametrization. Since
we are interested in the \emph{de facto} finite-time stable and unstable manifolds passing
through the hyperbolic fixed point $(0,0)$, we set $r_{0}=p_{0}=(0,0)$.

We observe that as the integration time $T$ increases, the unique
strainline and the unique stretchline through the origin converge
to their asymptotic limits. Figure \ref{fig:sMuM_convergence} shows 
the convergence of these
curves around the hyperbolic fixed point $(0,0)$. For integration
times $T\geq2$, the computed strainlines and stretchlines are virtually
indistinguishable from their asymptotic limits. Therefore, in the following, 
we fix the integration time $T=b-a=2$ with $a=0$ and $b=2$.

Note that while the strainline is indistinguishable from the stable manifold, 
the stretchline differs from the unstable manifold (see figure \ref{fig:sMuM_convergence}c). 
Stretchlines as \emph{de facto} finite-time unstable manifolds define
the directions along which passive tracers are observed to stretch.
To demonstrate this, in figure \ref{fig:duffing_tracer}, three disks with radii $10^{-3}$,
$5\times10^{-3}$ and $10^{-2}$ are initially centered at the origin.
For short advection times, the tracers elongate in the direction
of the stretchline, not the unstable manifold. Unlike the classic
unstable manifold, stretchlines evolve in time and only become invariant
when viewed in the extended phase space of the $(x,t)$ variables.
For longer advection times (not presented here), the stretchline converges to the unstable manifold 
and becomes virtually indistinguishable from it.

\subsection{Two-dimensional turbulence}

\label{sec:turb} We consider a two-dimensional velocity field $u:U\times\mathbb{R}^{+}\rightarrow\mathbb{R}^{2}$,
obtained as a numerical solution of the Navier--Stokes equations 
\begin{align}
\partial_{t}u+u\cdot\nabla u=-\nabla p+\nu\Delta u+f,\nonumber \\
\nabla\cdot u=0,\nonumber \\
u(x,0)=u_{0}(x).\label{eq:nse}
\end{align}
The domain $U=[0,2\pi]\times[0,2\pi]$ is periodic in both spatial
directions. The non-dimensional viscosity $\nu$ is equal to $10^{-5}$.
The forcing $f$ is random in phase and active over the wave numbers
$3.5<k<4.5$. The initial condition $u_{0}$ is the instantaneous
velocity field of a decaying turbulent flow. We solve equations (\ref{eq:nse})
by a standard pseudo-spectral method with $512\times512$ modes. The
time integration is carried out by a 4th order Runge--Kutta method
with adaptive step-size (MATLAB's ODE45). Equation (\ref{eq:nse}) is solved over the time interval
$I=[0,50]$.

One can, in principle, compute an attracting LCS at the beginning
of a time interval $I=[a,b]$ by advecting the attracting LCS extracted
at $t=b$ back to $t=a$. As mentioned in the Introduction, however,
this process is numerically unstable since attracting LCSs become
unstable in backward time. Their instability is apparent in figure \ref{fig:turb_uM_bwAdv},
where an attracting LCS (red) is advected backwards from $t=50$
to the initial time $t=0$. The advected curve is noisy and deviates
from the true pre-image (blue curve). The true pre-image, the stretchline, 
is computed as a trajectory of the eigenvector filed
$\xi_{2}^{f}$ of the forward Cauchy--Green strain tensor $\Cf$.

\begin{figure}[h!]
\centering \includegraphics[width=.8\textwidth]{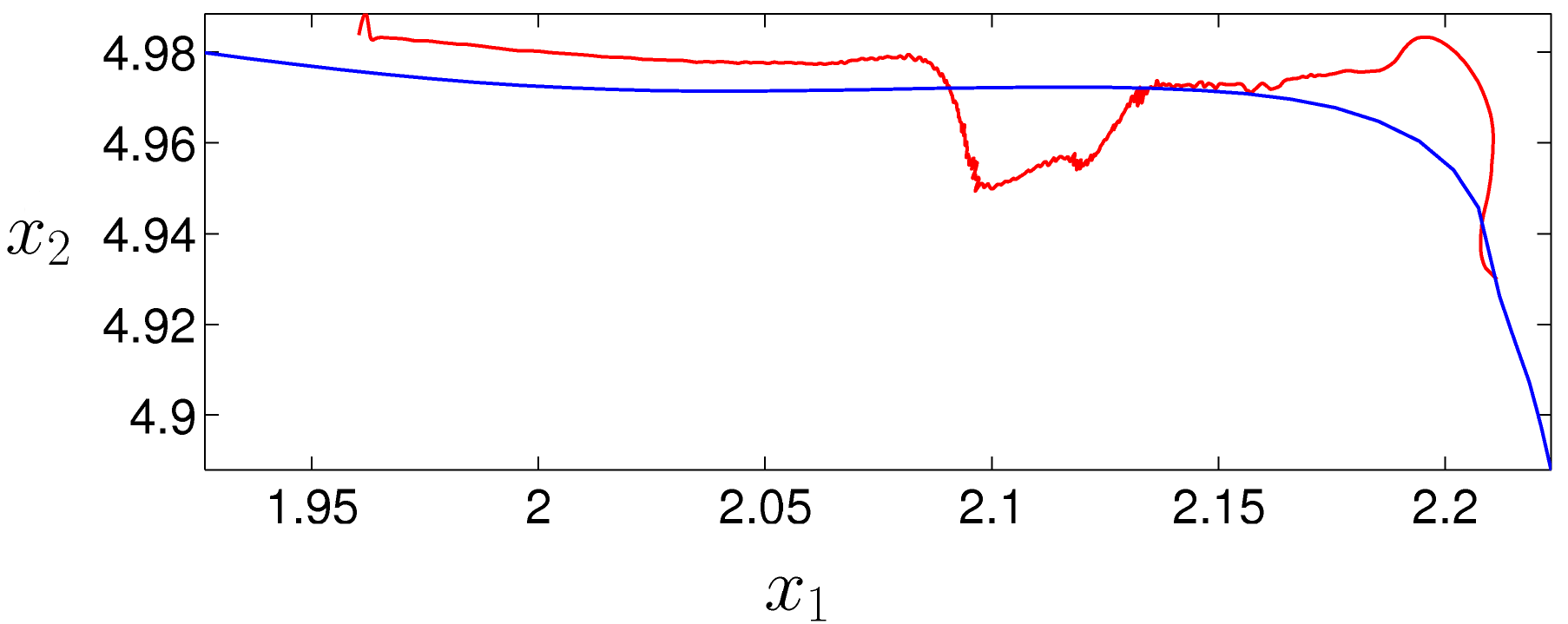}
\caption{Stretchline (blue) and the advected image of an attracting LCS (red)
at $t=0$. The exponential growth of errors in backward-time advection of the LCS results 
in a jagged curve that deviates from the true attracting LCS.}
\label{fig:turb_uM_bwAdv} 
\end{figure}

\begin{figure}[h!]
\centering \subfigure[]{\includegraphics[width=0.2\textwidth]{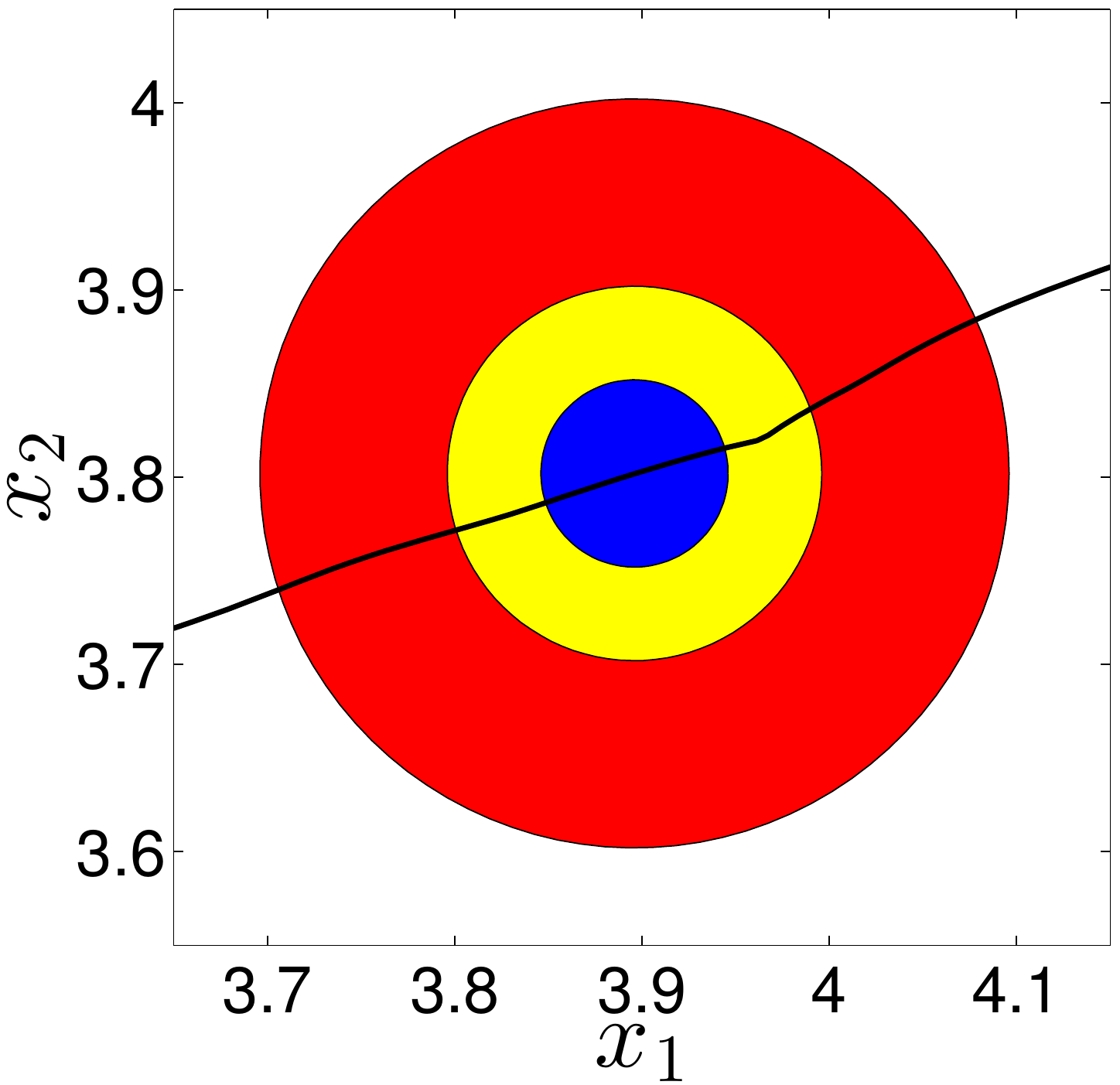}}
\subfigure[]{\includegraphics[width=0.24\textwidth]{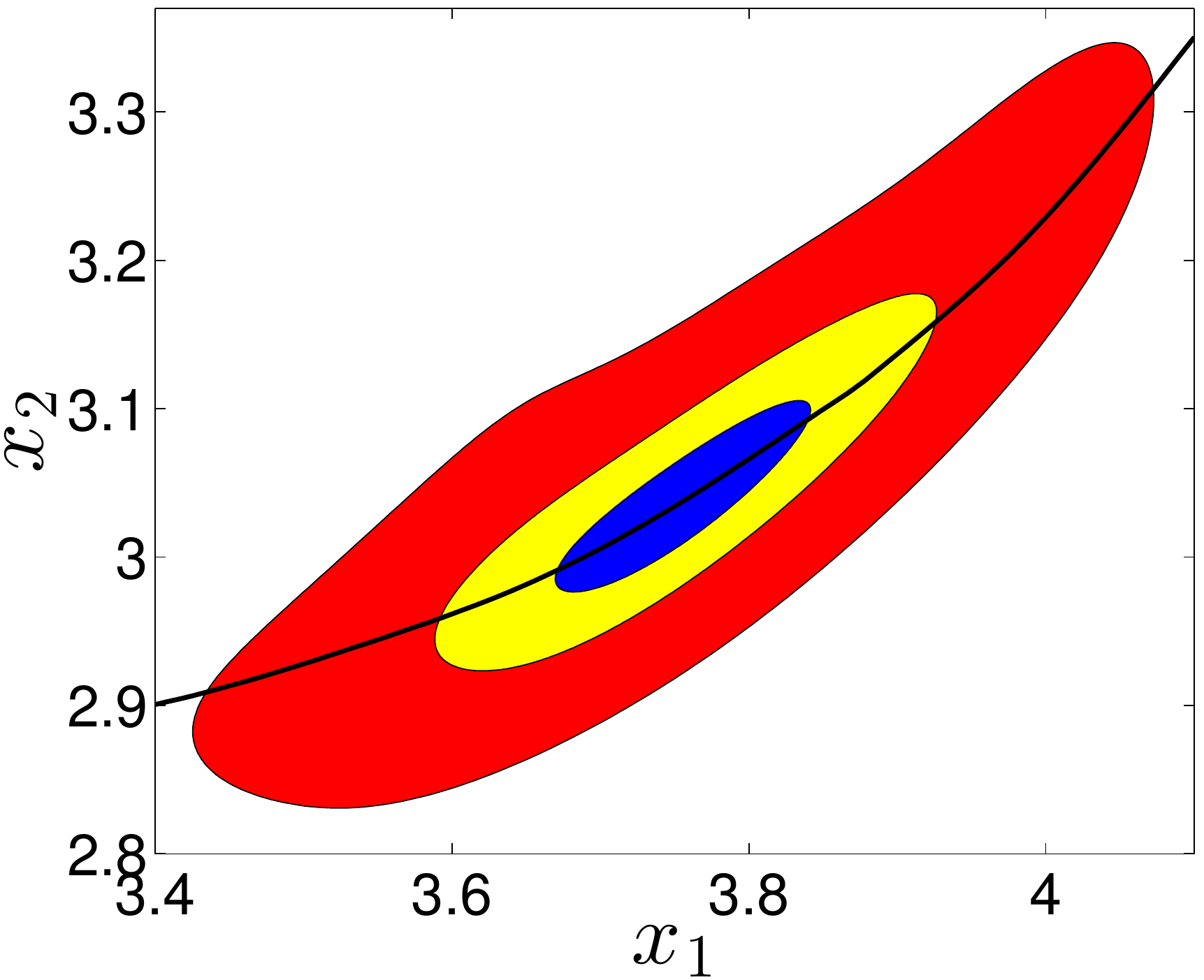}}
\subfigure[]{\includegraphics[width=0.36\textwidth]{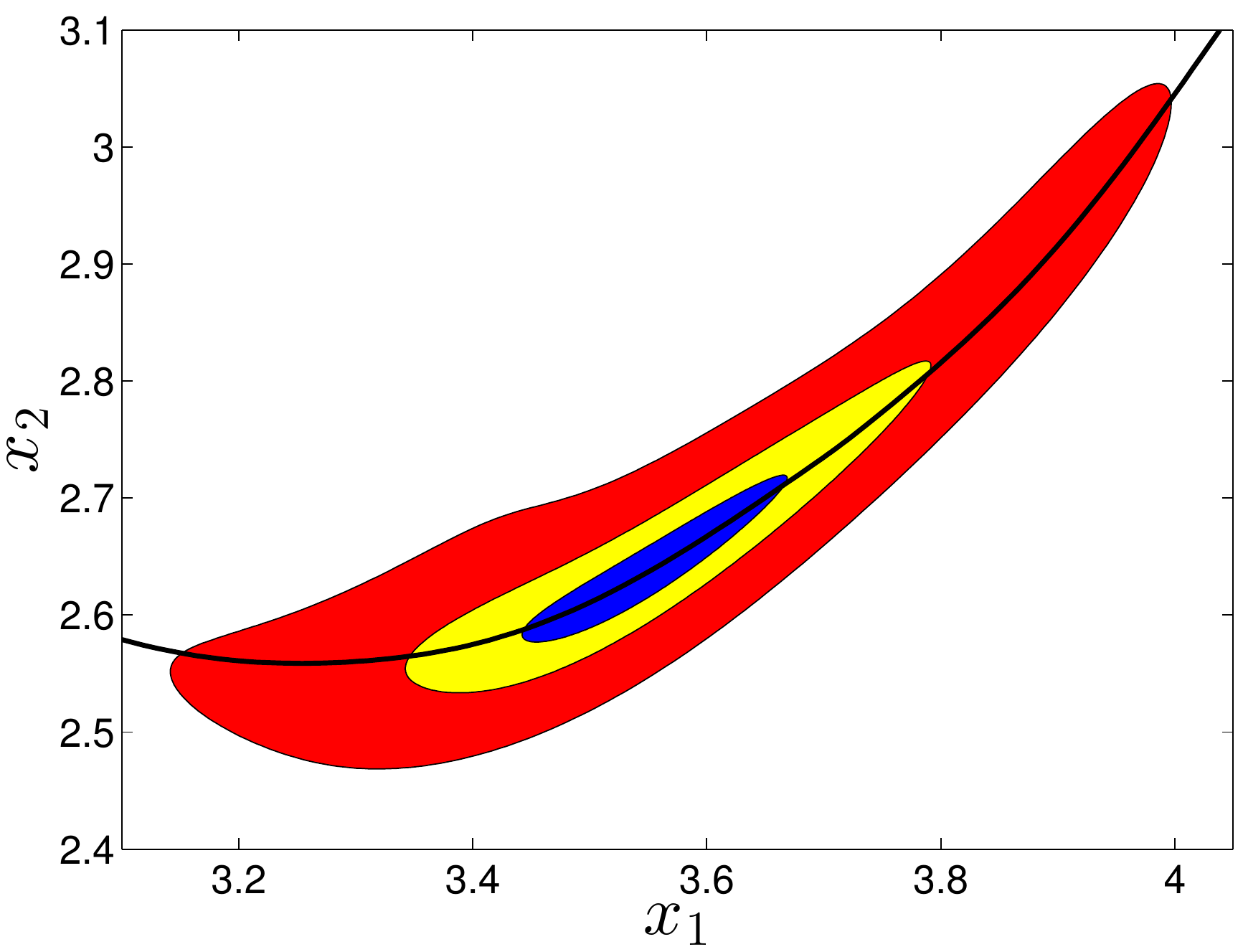}}\\
 \subfigure[]{\includegraphics[width=0.48\textwidth]{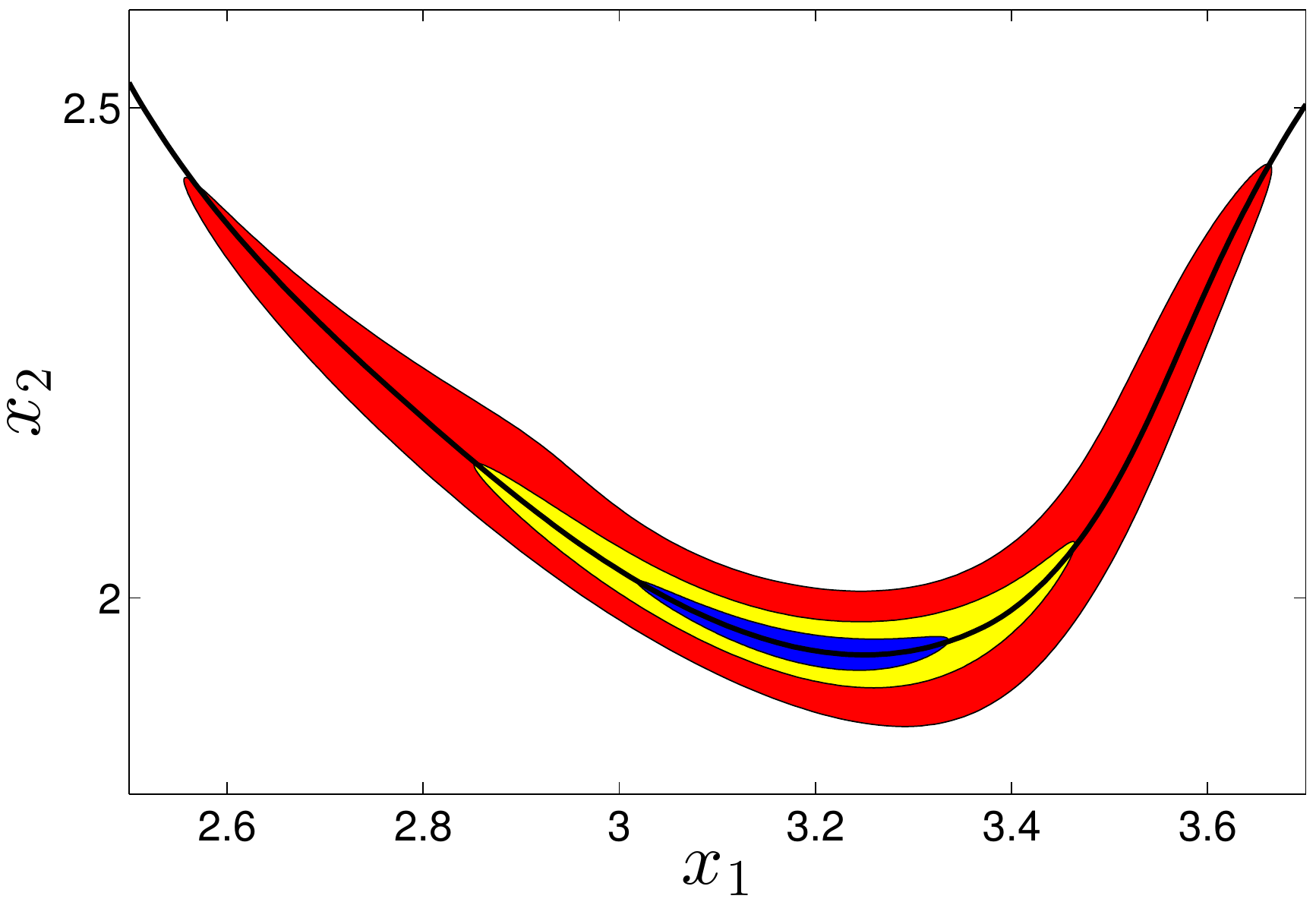}}
\caption{(a) The concentric tracers with radii $0.05$ (blue), $0.1$ (yellow)
and $0.2$ (red). The stretchline (black) passing through the center
is computed from the time interval $[0,50]$ (i.e., $a=0$ and $b=50$).
The tracers and the stretchline are then advected forward in time
to $t=10$ (b), $t=15$ (c), $t=25$ (d).}
\label{fig:tracers_turb} 
\end{figure}

We now extract the set of attracting LCSs that shape observed global
tracer patterns in this turbulent flow. Corollary \ref{cor:lcs_orientation}
establishes that such LCSs are necessarily forward stretchlines, i..e, trajectories
of (\ref{eq:xi2lin_ode}). It then remains to select the trajectories
of this ODE that stretch more under forward advection than any neighboring
stretchline \citep{geotheory}.

The relative stretching of a material line is defined as the ratio
of its length at the final time $t=b$ to its initial length at
time $t=a$. For a forward-time stretchline $\gamma$, one can show
(see Appendix \ref{app:RelStr}) that the relative stretching is given
by 
\begin{equation}
q(\gamma)=\frac{1}{\ell(\gamma)}{\displaystyle \int_{\gamma}\sqrt{\lambda_{2}^{f}}\;\id s,
\label{eq:relStretch}}
\end{equation}
where $\ell(\gamma)$ is the length of $\gamma$ at time $t=a$. Note
that no material line advection is required for computing the relative
stretching in (\ref{eq:relStretch}).

\begin{figure}[t!]
\centering 
\subfigure[]{\includegraphics[width=0.45\textwidth]{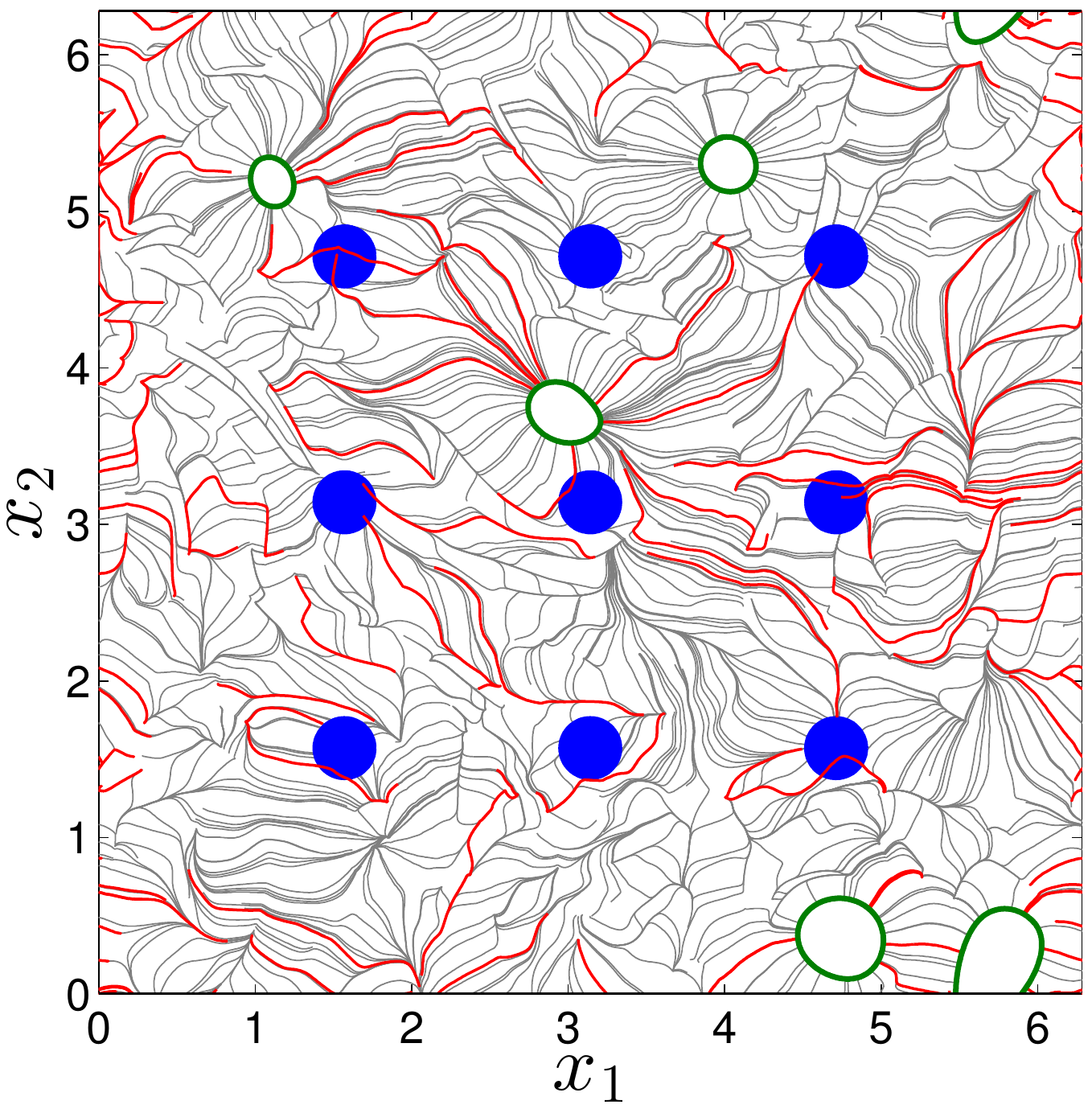}}\hspace{.05\textwidth}
\subfigure[]{\includegraphics[width=0.45\textwidth]{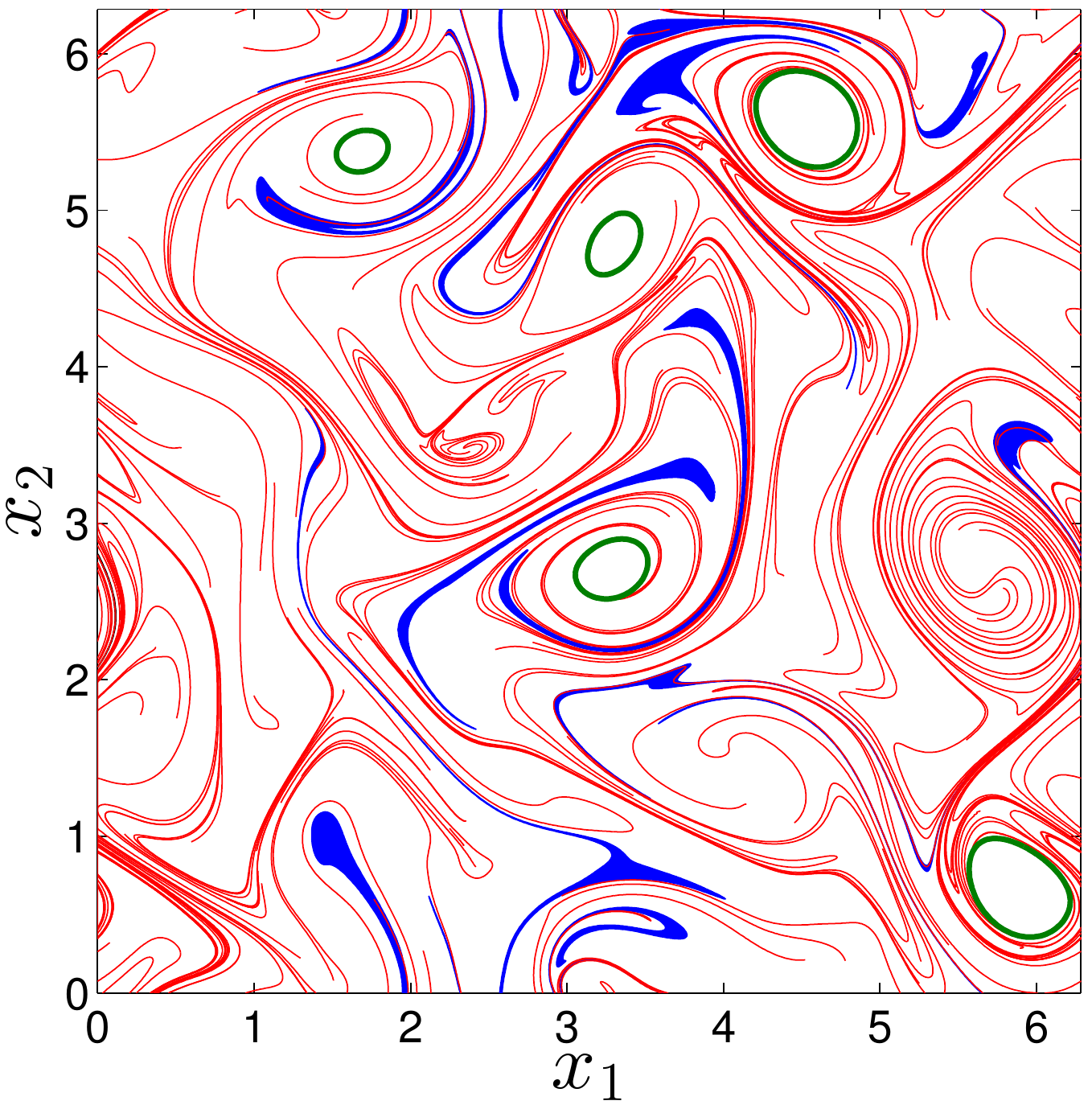}}
\caption{(a) Forward stretchlines at $t=0$. The attracting LCSs (i.e., locally most-stretching stretchlines) are highlighted in red. The green closed curves show the boundaries of elliptic regions. Tracers (blue circles) are used to visualize the overall mixing patterns.  (b) Advected image of the attracting LCSs, tracers and elliptic barriers at time $t=50$.}
\label{fig:tracer_uM} 
\end{figure}

In order to locate the stretchlines that locally maximize the relative
stretching (\ref{eq:relStretch}), we adopt the numerical procedure
outlined in \citet{geotheory} for locating the locally least-stretching strainlines.
Specifically, we first compute a dense enough set of stretchlines
as the trajectories of ODE (\ref{eq:xi2lin_ode}). We stop the integration
once the stretchline reaches a singularity of the tensor field
$\Cf$ or crosses an elliptic transport barrier. 

A \emph{singularity} of $\Cf$ is a point where $\Cf$ equals the
identity tensor, and hence its eigenvectors are not uniquely defined
(see \citet{2nd-order-tensorlines} and \citet{tricoche-top-simp} for more details).
An \emph{elliptic barrier} is the outermost member of a nested set
of closed curves that preserve their initial length (at time $t=a$)
under advection up to time $t=b$ \citep{geotheory}. In an incompressible
flow, an elliptic barrier also preserves its enclosed area under advection,
and hence the elliptic domain it encloses remains highly coherent.
For this reason, elliptic barriers can be considered as generalizations
of outermost KAM curves generically observed in temporally periodic two-dimensional
flows \citep{geotheory}.

We locate elliptic barriers using the detection algorithm developed
in \citet{geotheory} and \citet{mech1dof}. With the location of these barriers
and of the singularities of $C^{f}$ at hand, stretchlines are truncated
to compact line segments, rendering the integral in (\ref{eq:relStretch})
well-defined. Attracting LCSs at $t=a$ are then located as stretchline
segments that have higher relative stretching (\ref{eq:relStretch})
than any of their $C^{1}$-close neighbors. This process is briefly
summarized in the following algorithm:

\begin{alg}\ \vspace{.05cm}
 \renewcommand{\labelenumi}{\arabic{enumi}.}
\begin{enumerate}
\item Compute the Cauchy--Green strain tensor $\Cf$ over a uniform grid. 
\item Locate elliptic barriers by the procedure described in \citet{geotheory} and\\ \citet{mech1dof}.
\item Compute stretchlines as trajectories of (\ref{eq:xi2lin_ode}). The
initial conditions $p_{0}$ are chosen from a uniform grid over the
phase space. 
\item Stop the stretchline integration once the stretchlines reach either
a singular point or an elliptic region bounded by an elliptic barrier. 
\item For each stretchline so obtained, compute the relative stretching
(\ref{eq:relStretch}). 
\item Locate attracting LCSs as the stretchlines with locally maximal relative
stretching. 
\end{enumerate}
\end{alg}

To illustrate the defining role of stretchlines in the formation
of turbulent mixing patterns, we consider three concentric circles
of tracers with radii $0.05$, $0.1$ and $0.2$ at the initial time
$t=a=0$ (see figure \ref{fig:tracers_turb}). The circles are centered on
a stretchline with locally largest relative stretching (black curve). 
Then the stretchlines and tracers are advected to
times $t_0=10$, $t_0=15$ and $t_0=25$. In each case, we find that the tracer pattern stretches
and alines with the evolving stretchline, as expected.

We now turn to the global geometry of the attracting LCSs. Figure \ref{fig:tracer_uM}a shows stretchlines computed from a uniform grid 
of $30\times 30$ points. Attracting LCSs at time $t=0$, extracted as stretchlines with the locally largest
relative stretching, are highlighted in red. Also shown are the elliptic barriers (greed closed curves), 
as well as a select set of blue tracer disks that will be used to illustrate
the role of attracting LCSs. The advected positions of attracting
LCSs, elliptic barriers and tracer disks are shown in figure \ref{fig:tracer_uM}b.
Note how the attracting LCSs govern the deformation of the tracer disks
in the turbulent mixing region. Meanwhile, the elliptic barriers keep their
coherence by preserving their arclength and enclosed area.

\subsection{ABC flow}

\label{sec:abc} In two dimensions, stretchlines are constructed as
trajectories of the eigenvector field $\xi_{2}^{f}$. The resulting
curves are, by construction, everywhere orthogonal to the eigenvector
field $\xi_{1}^{f}$. In higher dimensions, however, constructing
stretch-surfaces that are everywhere orthogonal to the eigenvector
$\xi_{1}^{f}$ is nontrivial. In fact, for a given eigenvector field,
such a surface may only exists locally if a Frobenius condition
is satisfied \citep{diff-geom-lee}. This condition requires the eigenvectors spanning the
tangent space of the manifold (here, $\{\xi_{k}^{f}\}_{2\leq k\leq n}$)
to be in involution, i.e., their Poisson brackets $[\xi_{i}^{f},\xi_{j}^{f}]$
should be in the tangent space of the manifold for any $i,j\in\{2,3,\cdots,n\}$.

\begin{figure}[t]
\subfigure[]{\includegraphics[width=0.32\textwidth]{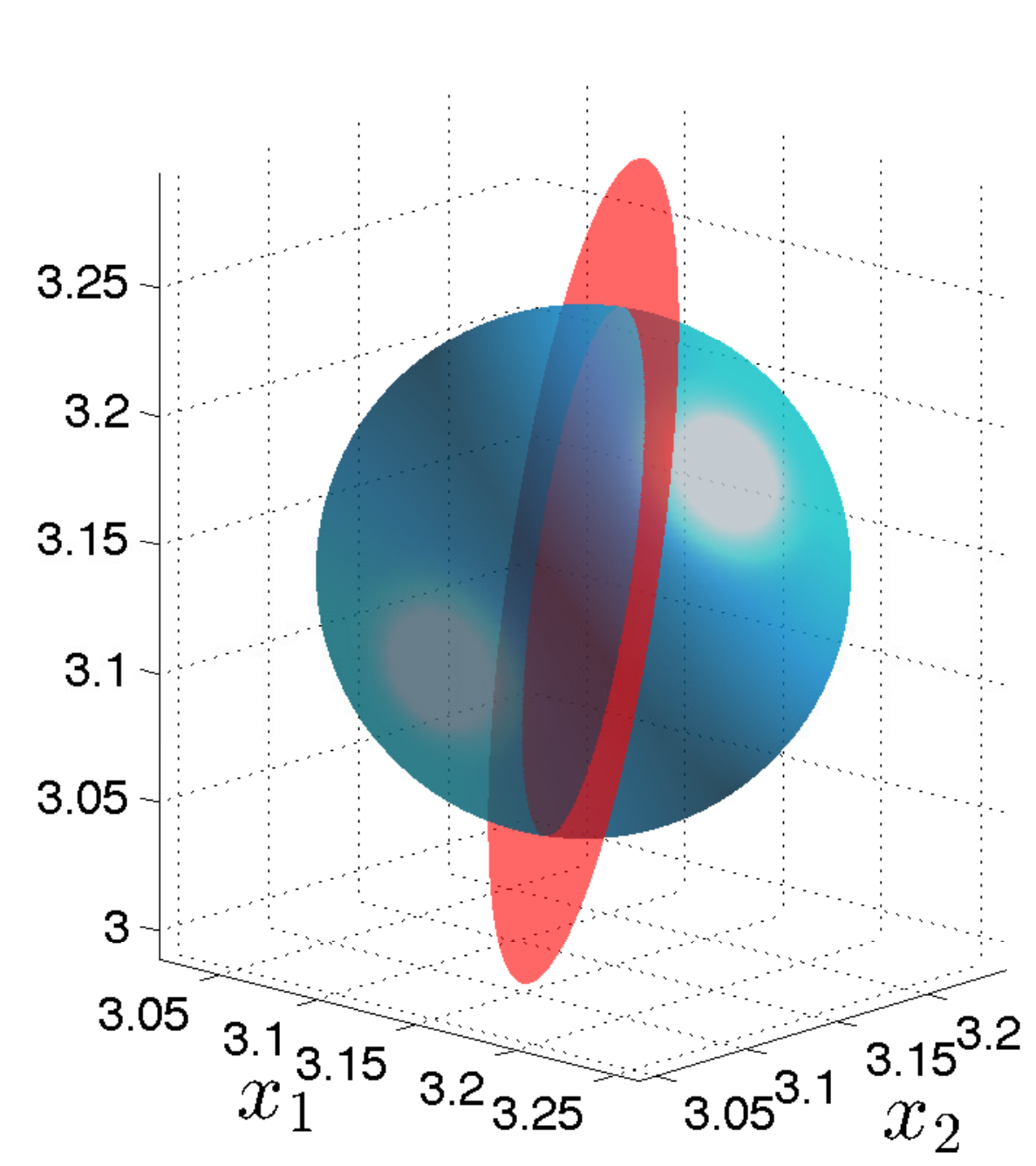}}
\subfigure[]{\includegraphics[width=0.62\textwidth]{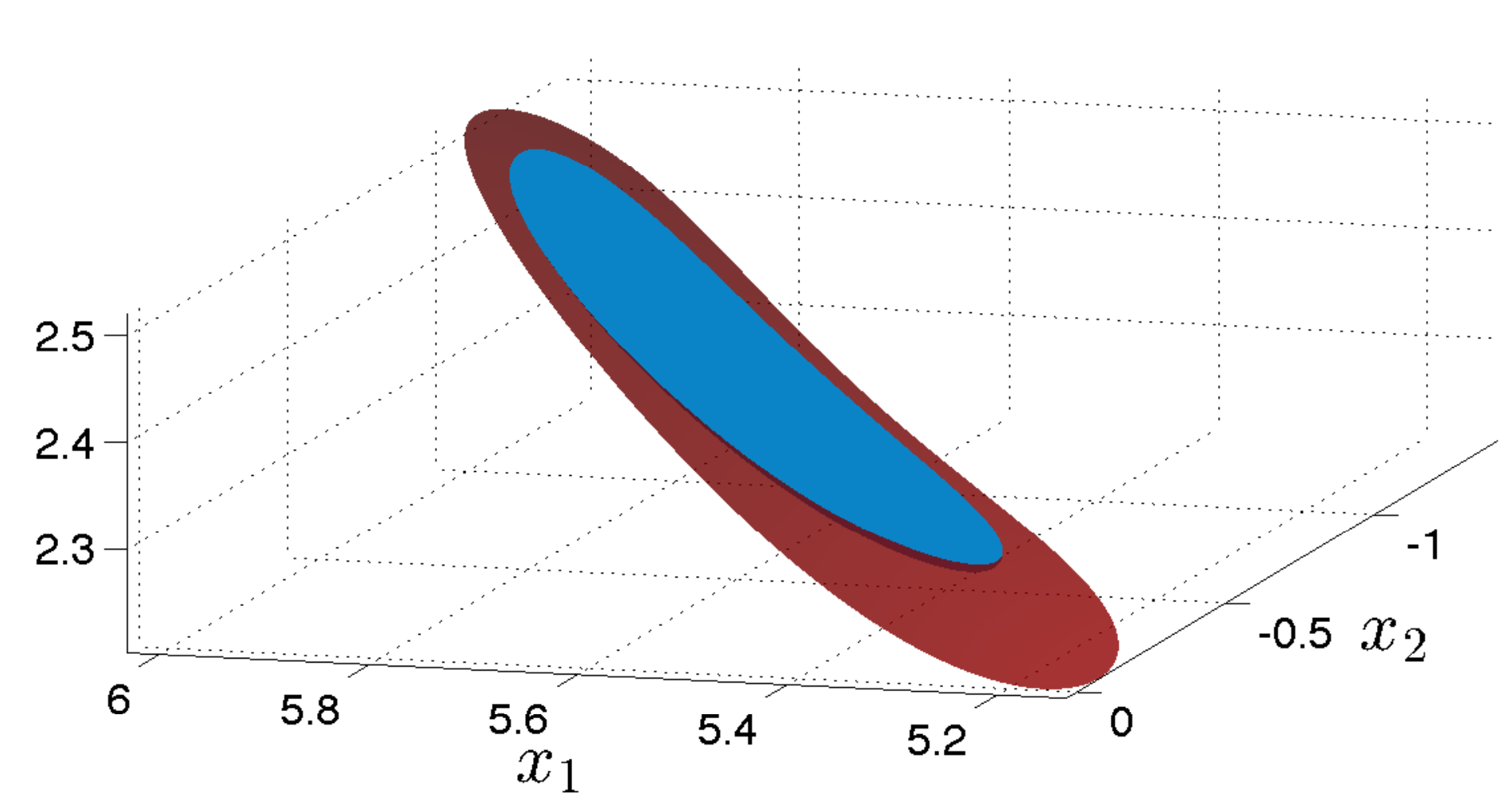}}
\caption{(a) A spherical tracer surface (blue) at time $t=0$ and the corresponding
approximate stretch-surface (red) passing through its origin. (b)
The advected positions of these surfaces at the final time $t=4$.}
\label{fig:abc_uM} 
\end{figure}

Even when the subset of the phase space satisfying this Frobenius condition
is known, constructing stretch-surfaces globally as smooth parametrized
manifolds normal to a specific vector field is challenging \citep{palmerius09,surf-construction}.
Here we only illustrate that locally constructed stretch-surfaces
do govern the formation of tracer patterns in three-dimensional flows
as well. 

We use the classic ABC flow \citep{topolHydro_arnold}
\begin{align}
\dot{x}_{1}=A\sin(x_{3})+C\cos(x_{2}),\nonumber \\
\dot{x}_{2}=B\sin(x_{1})+A\cos(x_{3}),\nonumber \\
\dot{x}_{3}=C\sin(x_{2})+B\cos(x_{1}),\label{eq:abc}
\end{align}
with $A=1$, $B=\sqrt{2/3}$ and $C=\sqrt{1/3}$. The $C^{f}$ strain
tensor is computed over the time interval $I=[0,4]$ (i.e., $a=0$
and $b=4$). We release a spherical blob of initial conditions centered
at $(\pi,\pi)$ with radius $0.1$. We approximate the stretch-surface
passing through this point by the plane normal to the first eigenvector
$\xi_{1}^{f}$ of $C^{f}$. Figure \ref{fig:abc_uM}a shows this plane
together with the sphere of tracers at time $t=0$. The advected images
of the tracer and the plane at time $t=4$ are shown in figure \ref{fig:abc_uM}b,
demonstrating that the stretch-surface through the center of the tracer
blob acts as a \emph{de facto} unstable manifold in this three-dimensional
example as well.

\section{Conclusions}

\label{sec:conclusion} We have shown that both repelling and attracting
LCSs (finite-time stable and unstable manifolds) at a time instance
$t=a$ can be extracted from a single forward-time computation over
a time interval $I=[a,b]$. This extraction requires the computation of
the eigenvectors of the forward Cauchy--Green strain tensor $\Cf$.
It has been found previously \citep{haller11,geotheory} that at time
$t=a$, the position of repelling LCSs are strain-surfaces, i.e.,
are everywhere orthogonal to the dominant eigenvector of $\Cf$. Here
we proved that the $t=a$ positions of attracting LCSs are stretch-surfaces,
i.e., are everywhere orthogonal to the weakest eigenvector of $\Cf$. 

The attracting LCSs obtained in this fashion are observed as centerpieces
around which tracer patterns develop. Even in autonomous dynamical
systems, these evolving centerpieces of trajectory evolution differ
from classic unstable manifolds, forming \emph{de facto} unstable
manifolds over finite times. 

In two-dimensional dynamical systems, stretchlines can be directly
computed as most-stretching trajectories of the autonomous ODE (\ref{eq:xi2lin_ode}).
In higher dimensions, stretch-surfaces satisfy linear systems of partial
differential equations (PDEs), as any surface normal to a given vector field
does \citep{palmerius09}. While a self-consistent global solution
of these PDEs remains numerically challenging, here we have illustrated
the local organizing role of stretch-surfaces through the advection
of their tangent spaces in the classic ABC flow. Results on the construction
of attracting LCSs from globally computed stretch-surfaces will be
reported elsewhere.

\begin{acknowledgement}
G. H. acknowledges partial support by the Canadian NSERC under grant 401839-11.
\end{acknowledgement}

\begin{appendices}

\section{Proof of Theorem \ref{thm:strainsurf_orientation}}

\label{app:proof} In order to prove Theorem \ref{thm:strainsurf_orientation},
we need two lemmas. The first lemma draws a connection between eigenvalues
of the forward- and backward-time Cauchy--Green strain tensors. The
second lemma establishes a relation between their eigenvectors. \begin{lemma}
The largest eigenvalue $\lambda_{n}^{f}$ of the forward-time strain
tensor $\Cf$ at a point $x_{a}\in U$ coincides with the reciprocal
of the smallest eigenvalue $\lambda_{1}^{b}$ of the backward-time
strain tensor $\Cb$ at the point $x_{b}=\Ff(x_{a})$, i.e., 
\begin{equation}
\lambda_{n}^{f}(x_{a})=\frac{1}{\lambda_{1}^{b}(x_{b})}.\label{eq:eigval_rel1}
\end{equation}
Similarly, we have 
\begin{equation}
\lambda_{n}^{b}(x_{b})=\frac{1}{\lambda_{1}^{f}(x_{a})}.\label{eq:eigval_rel2}
\end{equation}
\label{lem:eigvalue_relation} \end{lemma}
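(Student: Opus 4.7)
The plan is to reduce everything to a single linear-algebraic fact about $A^{\top}A$ and $AA^{\top}$, after observing that the two Cauchy--Green tensors at corresponding points are related by matrix inversion.

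The starting point is the identity $F_b^a = (F_a^b)^{-1}$, which holds because system (\ref{eq:dynsys}) generates a unique flow. Differentiating at $x_a$ and using the chain rule at $x_b = F_a^b(x_a)$ gives $\nabla F_b^a(x_b) = (\nabla F_a^b(x_a))^{-1}$. Setting $A := \nabla F_a^b(x_a)$, the definitions then yield
\[
C^f(x_a) = A^{\top}A, \qquad C^b(x_b) = (A^{-1})^{\top}A^{-1} = (AA^{\top})^{-1}.
\]

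Next I would invoke the standard fact that for any invertible $A$, the symmetric matrices $A^{\top}A$ and $AA^{\top}$ have identical spectra (each is $A^{\top}A = U\Sigma^2 U^{\top}$ and $AA^{\top} = V\Sigma^2 V^{\top}$ in the singular value decomposition $A = V\Sigma U^{\top}$). Therefore the eigenvalues of $C^b(x_b) = (AA^{\top})^{-1}$ are precisely the reciprocals of the eigenvalues of $C^f(x_a)$. Because taking reciprocals reverses the order of positive numbers, the $k$-th smallest eigenvalue of $C^b(x_b)$ equals the reciprocal of the $(n-k+1)$-st smallest eigenvalue of $C^f(x_a)$. In particular, $\lambda_1^b(x_b) = 1/\lambda_n^f(x_a)$ and $\lambda_n^b(x_b) = 1/\lambda_1^f(x_a)$, which are exactly (\ref{eq:eigval_rel1}) and (\ref{eq:eigval_rel2}).

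There is no real obstacle here; the only thing to be careful about is matching points correctly (the forward tensor is evaluated at $x_a$, the backward tensor at $x_b$) and quoting the ordering convention in (\ref{eq:cg_properties}) when reading off the extreme eigenvalues. Invertibility of $A$ is automatic since $F_a^b$ is a diffeomorphism, so $(AA^{\top})^{-1}$ is well defined and positive definite throughout.
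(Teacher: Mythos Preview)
Your argument is correct. The key identity $\nabla F_b^a(x_b)=\bigl(\nabla F_a^b(x_a)\bigr)^{-1}$ gives $C^b(x_b)=(AA^{\top})^{-1}$ with $A=\nabla F_a^b(x_a)$, and the standard SVD fact that $A^{\top}A$ and $AA^{\top}$ share the same spectrum then yields both eigenvalue relations after reversing order under the reciprocal map.

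The paper does not actually supply a proof: it simply invokes equation~(13) of \citet{smallest_ftle} and stops. Your write-up is therefore a genuinely self-contained alternative. What you gain is that the reader never has to leave the page; what the paper's one-line citation buys is brevity and a pointer to the original source where the full relation $\lambda_k^b(x_b)=1/\lambda_{n-k+1}^f(x_a)$ (for all $k$) is stated. Your derivation in fact recovers that full relation as well, so nothing is lost. The only cosmetic point is that you might phrase the spectrum equality without invoking the SVD---e.g., if $A^{\top}A v=\lambda v$ then $AA^{\top}(Av)=\lambda(Av)$---since that avoids introducing extra notation, but either route is fine.
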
 \begin{proof} This follows
directly from equation (13) in \citet{smallest_ftle}. \end{proof}

\begin{lemma} For any $x_{a}\in U$, the following identities hold
for any $k\in\{1,2,\cdots,n\}$. 
\begin{equation}
\langle\xi_{n}^{f}(x_{a}),\nabla\Fb(x_{b})\xi_{k}^{b}(x_{b})\rangle=\lambda_{n}^{f}(x_{a})\lambda_{k}^{b}(x_{b})\langle\xi_{n}^{f}(x_{a}),\nabla\Fb(x_{b})\xi_{k}^{b}(x_{b})\rangle,\label{eq:scaling}
\end{equation}
\begin{equation}
\langle\xi_{n}^{b}(x_{b}),\nabla\Ff(x_{a})\xi_{k}^{f}(x_{a})\rangle=\lambda_{n}^{b}(x_{b})\lambda_{k}^{f}(x_{a})\langle\xi_{n}^{b}(x_{b}),\nabla\Ff(x_{a})\xi^f_{k}(x_{a})\rangle,\label{eq:scaling1}
\end{equation}
where $\langle\cdot,\cdot\rangle$ is the Euclidean inner product
between two vectors. \label{prop:scaling} \end{lemma}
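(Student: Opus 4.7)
The plan is to reduce both identities to a short algebraic manipulation involving a single linear map. Set $A := \nabla\Ff(x_a)$ and observe that $\Fb\circ\Ff = \mathrm{id}$ gives, by the chain rule, $\nabla\Fb(x_b) = A^{-1}$ precisely at $x_b = \Ff(x_a)$. Consequently,
\begin{equation*}
\Cf(x_a) = A^\top A, \qquad \Cb(x_b) = A^{-\top} A^{-1},
\end{equation*}
and the two eigenvalue equations relevant to (\ref{eq:scaling}) read $A^\top A\,\xi_n^f(x_a) = \lambda_n^f(x_a)\,\xi_n^f(x_a)$ and $A^{-\top} A^{-1}\,\xi_k^b(x_b) = \lambda_k^b(x_b)\,\xi_k^b(x_b)$.

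Denote the inner product common to both sides of (\ref{eq:scaling}) by $I := \langle \xi_n^f(x_a), A^{-1}\xi_k^b(x_b)\rangle$. Multiplying the backward eigenvalue equation through by $A^\top$ gives $A^{-1}\xi_k^b = \lambda_k^b A^\top \xi_k^b$, which, substituted in $I$ and followed by an adjoint move, yields $I = \lambda_k^b\langle A\xi_n^f,\xi_k^b\rangle$. Next, rewriting the forward eigenvalue equation as $A\xi_n^f = \lambda_n^f A^{-\top}\xi_n^f$ converts this to $\lambda_n^f\lambda_k^b\langle A^{-\top}\xi_n^f,\xi_k^b\rangle$, and a final adjoint move restores $A^{-1}$ on the right, producing $I = \lambda_n^f(x_a)\,\lambda_k^b(x_b)\,I$, which is exactly (\ref{eq:scaling}).

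Identity (\ref{eq:scaling1}) follows by the symmetric computation in which the roles of the forward and backward flow maps are exchanged: setting $B := \nabla\Fb(x_b) = A^{-1}$, one has $\Cb(x_b) = B^\top B$ and $\Cf(x_a) = B^{-\top} B^{-1}$, and the same two-step substitution, using first the forward eigenvalue equation $B^{-\top}B^{-1}\xi_k^f = \lambda_k^f\xi_k^f$ and then the backward one $B^\top B\,\xi_n^b = \lambda_n^b\xi_n^b$, produces the analogous scaling. There is no real analytical obstacle here; the only care needed is the bookkeeping of base points, since the inversion $\nabla\Fb(x_b) = (\nabla\Ff(x_a))^{-1}$ holds only when $x_b$ is precisely $\Ff(x_a)$, as assumed in the statement. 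Note that the identity is vacuous unless $\lambda_n^f(x_a)\lambda_k^b(x_b)=1$, which by Lemma \ref{lem:eigvalue_relation} forces $k=1$; for all other $k$ one simply recovers the orthogonality $\xi_n^f(x_a)\perp\nabla\Fb(x_b)\xi_k^b(x_b)$, which is the geometric content needed for Theorem \ref{thm:strainsurf_orientation}.
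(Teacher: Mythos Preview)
Your proof is correct and follows essentially the same approach as the paper. Both arguments rest on the inverse relation $\nabla\Fb(x_b)=\bigl[\nabla\Ff(x_a)\bigr]^{-1}$ and on two adjoint moves that insert the Cauchy--Green tensors so their eigenvalues can be extracted; the only difference is cosmetic---you abbreviate $A=\nabla\Ff(x_a)$ and pre-process the eigenvalue equations (e.g., $A^{-1}\xi_k^b=\lambda_k^b A^\top\xi_k^b$), whereas the paper inserts identity factors $[\nabla\Fb]^{-\top}[\nabla\Fb]^{\top}$ and $[\nabla\Ff]^{-\top}[\nabla\Ff]^{\top}$ mid-chain and then recognises $\Cb$ and $\Cf$. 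Your closing remark on when the identity is vacuous is accurate and anticipates exactly how the lemma is used in the proof of Theorem~\ref{thm:strainsurf_orientation}.
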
 \begin{proof}
We prove identity (\ref{eq:scaling}). The proof of (\ref{eq:scaling1})
is similar and will be omitted.

First, note that since the flow map is invertible, we have $\Fb\left(\Ff(x_{a})\right)=x_{a}$
for any $x_{a}\in U$. Differentiating this identity with respect
to $x_{a}$, we obtain 
\begin{equation}
\nabla\Fb(x_{b})=\left[\nabla\Ff(x_{a})\right]^{-1}.\label{eq:Finv}
\end{equation}

The result then follows from the identity 
\begin{align*}
\langle\xi_{n}^{f}(x_{a}),\nabla\Fb(x_{b})\xi_{k}^{b}(x_{b})\rangle & =\langle\xi_{n}^{f}(x_{a}),[\nabla\Fb(x_{b})]^{-\top}[\nabla\Fb(x_{b})]^{\top}\nabla\Fb(x_{b})\xi_{k}^{b}(x_{b})\rangle\\
 & =\langle[\nabla\Fb(x_{b})]^{-1}\xi_{n}^{f}(x_{a}),\Cb(x_{b})\xi_{k}^{b}(x_{b})\rangle\\
 & =\lambda_{k}^{b}(x_{b})\langle\nabla\Ff(x_{a})\xi_{n}^{f}(x_{a}),\xi_{k}^{b}(x_{b})\rangle\\
 & =\lambda_{k}^{b}(x_{b})\langle[\nabla\Ff(x_{a})]^{-\top}[\nabla\Ff(x_{a})]^{\top}\nabla\Ff(x_{a})\xi_{n}^{f}(x_{a}),\xi_{k}^{b}(x_{b})\rangle\\
 & =\lambda_{k}^{b}(x_{b})\langle\Cf(x_{a})\xi_{n}^{f}(x_{a}),[\nabla\Ff(x_{a})]^{-1}\xi_{k}^{b}(x_{b})\rangle\\
 & =\lambda_{n}^{f}(x_{a})\lambda_{k}^{b}(x_{b})\langle\xi_{n}^{f}(x_{a}),\nabla\Fb(x_{b})\xi_{k}^{b}(x_{b})\rangle,\\
\end{align*}
where we have used identity (\ref{eq:Finv}) twice. \end{proof}

Now we turn to the proof of Theorem \ref{thm:strainsurf_orientation}.
\begin{proof}[Proof of Theorem \ref{thm:strainsurf_orientation}:]\ \\
\renewcommand{\labelenumi}{(\roman{enumi})}
\begin{enumerate}
\item \ Assume that $\mathcal{M}(t)$ is a backward stretch-surface. Then,
by definition, $\mathcal{M}(b)$ is everywhere orthogonal to the eigenvector
field $\xi_{1}^{b}$. In order to show that $\mathcal{M}(t)$ is a
forward strain-surface, it suffices to show that $\mathcal{M}(a)=\Fb(\mathcal{M}(b))$
is everywhere normal to the eigenvector field $\xi_{n}^{f}$. Since
$T_{x_{b}}\mathcal{M}(b)=\mbox{span}\{\xi_{k}^{b}(x_{b})\}_{2\leq k\leq n}$
for any $x_{b}\in\mathcal{M}(b)$, we have 
\[
T_{x_{a}}\mathcal{M}(a)=\mbox{span}\{\nabla\Fb(x_{b})\xi_{k}^{b}(x_{b})\}_{2\leq k\leq n},
\]
for all $x_{a}:=\Fb(x_{b})\in\mathcal{M}(a)$. Therefore, it suffices
to show that $\xi_{n}^{f}(x_{a})\perp\nabla\Fb(x_{b})\xi_{k}^{b}(x_{b})$
for any $x_{a}\in\mathcal{M}(a)$ and $k\in\{2,3,\cdots,n\}$.

From Lemma \ref{prop:scaling}, we have 
\begin{equation}
\langle\xi_{n}^{f}(x_{a}),\nabla\Fb(x_{b})\xi_{k}^{b}(x_{b})\rangle=\lambda_{n}^{f}(x_{a})\lambda_{k}^{b}(x_{b})\langle\xi_{n}^{f}(x_{a}),\nabla\Fb(x_{b})\xi_{k}^{b}(x_{b})\rangle,
\end{equation}
for any $x_{a}\in\mathcal{M}(a)$ and $k\in\{2,3,\cdots,n\}$.

Using identity (\ref{eq:eigval_rel1}), we obtain 
\begin{equation}
\langle\xi_{n}^{f}(x_{a}),\nabla\Fb(x_{b})\xi_{k}^{b}(x_{b})\rangle=\frac{\lambda_{k}^{b}(x_{b})}{\lambda_{1}^{b}(x_{b})}\langle\xi_{n}^{f}(x_{a}),\nabla\Fb(x_{b})\xi_{k}^{b}(x_{b})\rangle.\label{eq:scaling2}
\end{equation}
Hence, if 
\begin{equation}
\lambda_{1}^{b}(x_{b})\neq\lambda_{k}^{b}(x_{b}),\ \ k\in\{2,3,\cdots,n\},\label{eq:nondeg}
\end{equation}
then we have 
\begin{equation}
\langle\xi_{n}^{f}(x_{a}),\nabla\Fb(x_{b})\xi_{k}^{b}(x_{b})\rangle=0,
\end{equation}
for any $k\in\{2,3,\cdots,n\}$. But since $\lambda_{1}^{b}\leq\lambda_{2}^{b}\leq\cdots\leq\lambda_{n}^{b}$,
conditions (\ref{eq:nondeg}) hold if and only if $\lambda_{1}^{b}(x_{b})\neq\lambda_{2}^{b}(x_{b})$.
This condition holds away from repeated eigenvalues of $C^{b}$.

In short, if $\xi_{1}^{b}(x_{b})\perp T_{x_{b}}\mathcal{M}(b)$ for
all $x_{b}\in\mathcal{M}(b)$ then $\xi_{n}^{f}(x_{a})\perp T_{x_{a}}\mathcal{M}(a)$
for any $x_{a}\in\mathcal{M}(a)$ which implies that $\mathcal{M}(a)$
is a forward strain-surface. This concludes the sufficiency
condition of Theorem \ref{thm:strainsurf_orientation}-(i). 

As for the necessity of the same condition, let $\mathcal M(t)$ be a forward strain-surface, i.e. $T_{x_{a}}\mathcal{M}(a)=\mbox{span}\{\xi_{k}^{f}(x_{a})\}_{1\leq k\leq n-1}$ for any $x_a\in\mathcal M(a)$.
Therefore, the tangent space of its advected image $\mathcal M(b)$ is given by
\[
T_{x_{b}}\mathcal{M}(b)=\mbox{span}\{\nabla\Ff(x_{a})\xi_{k}^{f}(x_{a})\}_{1\leq k\leq n-1}.
\]
To show that $\mathcal M(t)$ is a backward stretch-surface, it suffices to show that
$\xi_{1}^{b}(x_{b})\perp\nabla\Ff(x_{a})\xi_{k}^{f}(x_{a})$ for any $x_{b}\in\mathcal{M}(b)$ and $k\in\{1,2,\cdots,n-1\}$.
Similarly to equation (\ref{eq:scaling2}), one can show that
\begin{equation}
\langle\xi_{1}^{b}(x_{b}),\nabla\Ff(x_{a})\xi_{k}^{f}(x_{a})\rangle=\frac{\lambda_{k}^{f}(x_{a})}{\lambda_{n}^{f}(x_{a})}\langle\xi_{1}^{b}(x_{b}),\nabla\Ff(x_{a})\xi_{k}^{f}(x_{a})\rangle,\label{eq:scaling3}
\end{equation}
which implies that $\langle\xi_{1}^{b}(x_{b}),\nabla\Ff(x_{a})\xi_{k}^{f}(x_{a})\rangle=0$ for $k\in\{1,2,\cdots,n-1\}$ away from the degenerate points where $\lambda^f_n=\lambda^f_{n-1}$.

\item \ The proof is identical to that of part (i). 
\end{enumerate}
\end{proof}

\section{Relative stretching of stretchlines}\label{app:RelStr} 
Here, we derive formula (\ref{eq:relStretch})
for the relative stretching of forward stretchlines. Let $\gamma_{t}$
be a smooth material line. Denote its time-$a$ and time-$b$ positions
by $\gamma_{a}$ and $\gamma_{b}$, respectively. Then, the relative
stretching of the material line $\gamma_{t}$ over the time interval
$I=[a,b]$ is defined as 
\begin{equation}
q(\gamma_{t}):=\frac{\ell(\gamma_{b})}{\ell(\gamma_{a})},\label{def:relStretch}
\end{equation}
where $\ell$ denotes the length of a curve.

Let $r:s\mapsto r(s)$ be the parametrization of $\gamma_{a}$ by
arc-length, i.e., let $|r'(s)|=1$ for all $s\in[0,\ell(\gamma_{a})]$.
Since $\gamma_{b}=\Ff(\gamma_{a})$, the mapping $\Ff\circ r:s\mapsto\Ff(r(s))$
is a parametrization of the curve $\gamma_{b}$. Therefore, its length
$\ell(\gamma_{b})$ is given by 
\begin{align}
\ell(\gamma_{b}) & =\int_{0}^{\ell(\gamma_{a})}|\nabla\Ff(r(s))r'(s)|\id s\nonumber \\
 & =\int_{0}^{\ell(\gamma_{a})}\sqrt{\left\langle r'(s),\Cf(r(s))r'(s)\right\rangle }\id s.\label{eq:length_tb}
\end{align}

Now, if the material line $\gamma_{t}$ is a forward stretchline,
we have $r'(s)=\xi_{2}^{f}(r(s))$ for all $s\in[0,\ell(\gamma_{a})]$.
Substituting this in equation (\ref{eq:length_tb}), we obtain 
\[
\ell(\gamma_{b})=\int_{0}^{\ell(\gamma_{a})}\sqrt{\lambda_{2}^{f}(r(s))}\id s:=\int_{\gamma_{a}}\sqrt{\lambda_{2}^{f}}\id s.
\]

Therefore, by definition (\ref{def:relStretch}), the relative stretching
of a forward-time stretchline $\gamma_{t}$ is given by 
\begin{equation*}
q(\gamma_{t})=\frac{1}{\ell(\gamma_{a})}{\displaystyle \int_{\gamma_{a}}\sqrt{\lambda_{2}^{f}}\id s.}
\end{equation*}

\end{appendices}

%

\end{document}